\documentclass[a4paper]{article}


\usepackage{microtype} 
\usepackage{lmodern} 

\usepackage{amsmath,amsfonts,amssymb,amsopn}
\usepackage{graphicx}
\usepackage{epstopdf} 
\usepackage{flafter} 
\usepackage{amsthm} 

\usepackage{subcaption} 
\usepackage{placeins} 
\usepackage{siunitx} 
\sisetup{output-exponent-marker=\ensuremath{\mathrm{e}}} 
\usepackage{algorithm, algpseudocode} 
\usepackage{xcolor} 

\usepackage[hidelinks]{hyperref} 

\usepackage{tikz, pgfplots} 
\usetikzlibrary{backgrounds}

\newlength\figureheight
\newlength\figurewidth

\newcommand{\sethw}[2]{
	\setlength\figureheight{#1\textwidth}
	\setlength\figurewidth{#2\textwidth}	
}

\usepackage{pgf}


\newcommand{\bs}[1]{\boldsymbol{#1}} 

\renewcommand{\d}[1]{\ensuremath{\operatorname{d}\!{#1}}}

\newcommand{\diff}[2]{\ensuremath{\frac{\d{#1}}{\d{#2}}}}
\newcommand{\diffp}[2]{\ensuremath{\frac{\partial{#1}}{\partial{#2}}}}
\newcommand{\boundary}[1]{\partial\!{#1}}
\newcommand{\hess}[1]{ \mbox{Hess } #1}

\newcommand{\mean}[1]{\ensuremath{\mathbb{E}\mathopen{}\left[{#1}\right]\mathclose{}}}
\newcommand{\var}[1]{\ensuremath{\mathbb{V}\mathopen{}\left[{#1}\right]\mathclose{}}}


\newcommand{\norm}[1]{\ensuremath{\|{#1}\|}}

\newcommand{\ip}[2]{(#1,#2)} 
\newcommand{\und}{\cdot} 

\newtheorem*{remark}{Remark}

\newtheorem{theorem}{Theorem}
\newtheorem{corollary}{Corollary}
\newtheorem{assumption}{Assumption}




\newcommand{\hl}[1]{\emph{#1}}

\usepackage[margin=1.3in]{geometry} 

\title{MG/OPT and MLMC for Robust Optimization of PDE\MakeLowercase{s}}
\author{A. Van Barel \and S. Vandewalle}

\begin{document}
	
\maketitle

\begin{abstract}
  An algorithm is proposed to solve robust control problems constrained by partial differential equations with uncertain coefficients, based on the so-called MG/OPT framework.
  The levels in the MG/OPT hierarchy correspond to discretization levels of the PDE, as usual. For stochastic problems, the relevant quantities (such as the gradient) contain expected value operators on each of these levels. They are estimated using a multilevel Monte Carlo method, the specifics of which depend on the MG/OPT level. Each of the optimization levels then contains multiple underlying multilevel Monte Carlo levels. The MG/OPT hierarchy allows the algorithm to exploit the structure inherent in the PDE, speeding up the convergence to the optimum. In contrast, the multilevel Monte Carlo hierarchy exists to exploit structure present in the stochastic dimensions of the problem. A statement about the asymptotic cost of the algorithm is proven, and some additional properties are discussed. 
  The performance of the algorithm is numerically investigated for three test cases. 
  A reduction in the number of samples required on expensive levels and therefore in computation time can be observed.
\end{abstract}

	\textbf{Key words. } Robust optimization, PDE, MG/OPT, multilevel Monte Carlo, optimal control, gradient \\

	\textbf{AMS subject classifications. } 35Q93, 65C05, 65K10, 49M05 


\section{Introduction}
This paper addresses the optimization of systems constrained by partial differential equations (PDEs) with uncertain parameters. An optimizer is considered robust if it performs well across a wide set of realizations of those parameters. Due to the uncertainty, a given deterministic control input yields a stochastic cost. The optimizer's performance is usually captured by introducing a \emph{risk measure}, which assesses the desirability of the original cost's distribution. The cost functional obtained by applying the risk measure to that original cost, is then deterministic again.
Many different choices for the risk measure exist; see, e.g., \cite{rockafellar2013fundamental, kouri2016risk}. 
The expected value operator, which we employ in this paper, is the most straightforward one.
Nevertheless, other risk measures may also be compatible with the method described in this paper. A necessary condition is that the gradient of the resulting cost exists and can be estimated to arbitrary precision using a Monte Carlo method. Not all formulations for incorporating the uncertainties are obtained as described above. Some other common approaches are described in, e.g., \cite{Ali2016, Borzi2012, Borzi2010, lee2017comparison}. However, in this paper, the term \emph{robust control problem} specifically indicates the optimization of the expected value of some cost.

The robust control problem has been addressed previously using various methods. The stochastic Galerkin and stochastic collocation methods \cite{borzi2009VWmultigrid, tiesler2012stochastic, Rosseel2012} are most suitable for lower-dimensional stochastic spaces, since the number of collocation points increases rapidly with the dimension. A proper orthogonal decomposition method has been investigated in \cite{borzi2011pod}, and reduced basis methods in, e.g., \cite{chen2014weighted}. Recently, quasi-Monte Carlo methods have been investigated in this context \cite{guth2019quasi}.
Many techniques apply some existing (multigrid) solver to a sampled version of the problem. This has the drawback that the sampling is then independent of the solver's progress, e.g., in a multigrid solver, the sampling is the same regardless of the multigrid level.
Furthermore, the sampling scheme could be adapted to the current iteration or, if applicable, optimization step. These ideas are used in, e.g., \cite{kouri2013trust}. Stochastic gradient methods have been thoroughly investigated for elliptic problems in the work of Martin, Krumscheid and Nobile \cite{martin2018analysis} and others \cite{geiersbach2019stochastic}. 
Finally, MG/OPT techniques \cite{lewis2000multigrid, lewis2005} have been used for robust optimization. MG/OPT is a multigrid approach to optimization problems. It requires coarser optimization problems to be defined at each of the MG/OPT levels. They serve to cheaply calculate updates for the finer levels recursively. A major inspiration for our work was the method due to Kouri \cite{Kouri2014}. There, the `coarser' levels correspond to taking a smaller number of sample points in the stochastic space. 

In a previous paper, we showed that the multilevel Monte Carlo (MLMC) method \cite{cliffe2011, giles2015} can be used to efficiently generate a gradient (or Hessian vector product) for problems constrained by a PDE with uncertain coefficients \cite{vanbarel2019robust}.
These quantities are normally returned at the finest level. Unless the gradient (or Hessian) based optimization algorithm is somehow made aware of the details of the MLMC calculation, optimization steps can then only be done on the finest level. For some problems, this may result in a rather slow convergence to the optimum. To remedy this, we propose to combine this approach with the MG/OPT framework. We construct our coarser optimization problems by not only reducing the number of samples, but also by coarsening the discretization of the spatial and temporal dimensions. Each MG/OPT level thus has its own MLMC hierarchy. Coarser MG/OPT levels retain only the coarser MLMC levels, and take fewer samples on those levels. A qualitative comparison between the proposed method and the existing methods is deferred to \S\ref{sec:qualcomp}. This allows some aspects that are of interest to be described in more detail first. In the final parts of the paper, a practical algorithm with stopping conditions is given. Under some assumptions, we can prove a theorem about the convergence rate of that algorithm.

The paper is structured as follows. The notation to describe the robust control problem is introduced in \S\ref{sec:problem_formulation} and the corresponding optimality conditions are described in general terms in \S\ref{sec:optconds}. Next, \S\ref{sec:MLMC} describes the MLMC method and its properties in this context. 
MG/OPT and some of its properties are recalled in \S\ref{sec:MG/OPT}. Furthermore, details such as the precise number of samples to take at each MG/OPT and MLMC level and the specifics of the line searches and the underlying optimization algorithms are also discussed. \S\ref{sec:qualcomp} provides a qualitative comparison between the method of Kouri \cite{Kouri2014}, previous multigrid based methods, e.g., \cite{Rosseel2012}, the MLMC method \cite{vanbarel2019robust} and the method described in this paper. The practical algorithm is detailed in \S\ref{sec:robopt}. The convergence rate of the cost to obtain a gradient norm of a given accuracy is investigated and some other details are discussed. 
Finally, \S\ref{sec:numresults} provides numerical evidence of the method's performance for three fairly different model problems. These are the standard elliptic control problem with lognormal diffusion coefficient, the Dirichlet to Neumann map, and a control problem constrained by Burgers' equation. Some concluding remarks can be found in \S\ref{sec:conclusion}.

\section{Problem formulation}
\label{sec:problem_formulation}
\newcommand{\rJ}{\ensuremath{\hat{J}}}
\newcommand{\stvar}{\ensuremath{k}} 
Let $(\Omega, \mathcal{A}, \mu)$ denote a complete probability space. The sample space $\Omega$ contains all possible realizations $\omega$ of the random influence. Its dimension is the stochastic dimension of the problem and may be infinite. $\mathcal{A}$ is the set of all events (subsets of $\Omega$) and $\mu$ is a measure that maps events in $\mathcal{A}$ to probabilities in $[0,1]$. 
The expected value operator and the variance operator of a stochastic variable $\stvar$ are denoted as follows
\begin{align*}\mean{\stvar} \triangleq \int_\Omega \stvar \d{\mu(\omega)}, \quad\; \var{\stvar} \triangleq \mean{(\stvar - \mean{\stvar})^2} = \mean{\stvar^2} - \mean{\stvar}^2.\end{align*}

Let $U$ and $Y$ denote Hilbert spaces in which, respectively, the control and state reside. The PDE is then formally represented by $c(y,u) = 0$ with $c: Y \times U \rightarrow Y^*$. 
Due to uncertainties in the parameters of the PDE, the state $y$ is stochastic. We assume that $Y = Z \otimes L^2(\Omega)$ with $Z$ a Hilbert space over the physical domain(s) of individual (deterministic) realizations of the state, and $\otimes$ denoting the tensor product. The control $u$, however, is deterministic in this paper, corresponding to the situation where its choice must be made without incorporating information about the specific realization of the stochastic parameters. 
The constraint $c$ is understood to represent the PDE for ($\mu$-almost) all realizations $\omega$ of the stochastic influence, i.e., $c(y,u) = 0$ means $c_\omega(y_\omega, u) = 0$ $\mu$-a.e.~in $\Omega$, with $c_\omega : Z \times U \rightarrow Z^*$ representing one realization of the PDE. We require that $c(y,u) = 0$ can be uniquely solved for all $u \in U$, defining a mapping $S: U \rightarrow Y : u \mapsto y = S(u)$. This requirement puts a constraint on which variables can be chosen as control variables. We further assume $U,Z\subseteq L^2$ and therefore $Y \subseteq L^2 \otimes L^2(\Omega)$.
In this paper we solve robust control problems for tracking type cost functionals, such as for example
\begin{equation}
\min_{u \in U, y \in Y} J(y,u) = \frac{1}{2} \mean{\norm{y-z}^2} + \frac{\alpha}{2}\norm{u}^2 \quad \text{s.t.}\quad c(y,u)=0,
\label{eq:robust_problem}
\end{equation}
where $\|\und\|$ denotes the $L^2$-norm (over the physical domain(s) of $y$ or $u$) and $z \in L^2$ denotes some deterministic target function.
One can eliminate the constraint using $S$ to obtain the so-called reduced problem 
\begin{equation}
\min_{u \in U} \rJ(u) \;\text{ with }\; \rJ(u) \triangleq J(S(u),u) = \frac{1}{2} \mean{\norm{S(u)-z}^2} + \frac{\alpha}{2}\norm{u}^2.
\label{eq:reduced_robust_problem}
\end{equation}

In any case, a cost functional $J: Y \times U \rightarrow \mathbb{R}$ must be such that the result is always deterministic. In this paper, this is achieved simply by employing the expected value operator. A number of other risk measures, e.g., risk measures including a penalty on the variance of the state $y$, can also be solved using the methods described in this paper \cite{vanbarel2019robust}. 

\section{Optimality conditions}
\label{sec:optconds}
\newcommand{\lm}{p} 
Let $\ip{\und}{\und}_{U}$, $\ip{\und}{\und}_{Z}$ and $\ip{\und}{\und}_{Y}$ denote the inner products in $U$, $Z$, and $Y$, respectively. Since $Y = Z \otimes L^2(\Omega)$, we naturally take $\ip{\und}{\und}_{Y} = \mean{\ip{\und}{\und}_{Z}}$. In the remainder of this text, elements of the dual space under those inner products are assumed to be pulled back to the primal space. 
The optimality conditions can be formally derived by introducing a Lagrange multiplier $\lm \in Y$, defining the Lagrangian
$$ \mathcal{L}(y,u,\lm) \triangleq J(y,u) + \ip{\lm}{c(y,u)}_{Y}$$
and setting its derivatives to $\lm,y$, and $u$ to zero, yielding
\begin{equation}
\left\{
\begin{aligned}
\nabla_\lm \mathcal{L} &= c(y,u) = 0 \vphantom{\diffp{c}{y}}, \\
\nabla_y \mathcal{L} &= \nabla_y J + \Big(\diffp{c}{y}\Big)^*[\lm] = 0, \\
\nabla_u \mathcal{L} &= \nabla_u J + \Big(\diffp{c}{u}\Big)^*[\lm] = 0.
\end{aligned}
\right.
\label{eq:optcond_lagr_general}
\end{equation}
The symbol $^*$ is used to denote the adjoint of a linear operator. These equations are referred to as the constraint equation, the adjoint equation, and the optimality condition, in the order as stated 
above.
Since for any $\tilde{u} \in U$,
$$
\diffp{\ip{p}{c}_{Y}}{u}[\tilde{u}] = \ip{p}{\diffp{c}{u}[\tilde{u}]}_{Y} 
	= \mean{ \ip{p_\omega}{\diffp{c_\omega}{u}[\tilde{u}]}_{Z} }
	= \ip{\mean{\Big(\diffp{c_\omega}{u}\Big)^* [p_\omega]}}{\tilde{u}}_{U},
$$
the optimality condition can be rewritten as 
\begin{equation}
\label{eq:1}
\nabla_u J + \mean{\Big(\diffp{c_\omega}{u}\Big)^*[\lm_\omega]} = 0.
\end{equation}
The left hand side in (\ref{eq:1}) expresses the gradient of the reduced cost functional w.r.t. $\ip{\und}{\und}_U$, i.e., $\nabla \rJ$.
Optimality thus requires the reduced gradient to be zero.
Taking $U = L^2$, the equations for the reduced gradient for the cost function in the example (\ref{eq:robust_problem}) simplify to 
\begin{equation}
\left\{
\begin{aligned}
c(y,u) &= 0 \vphantom{\diffp{c}{y}},\\
-\Big(\diffp{c}{y}\Big)^*[\lm] &= y-z,
\\
\nabla \rJ(u) &= \alpha u + \mean{\Big(\diffp{c_\omega}{u}\Big)^*[\lm_\omega]}.
\end{aligned}
\right.
\label{eq:optcond_lagr}
\end{equation}
\begin{assumption}
	The derivatives (Fr\'echet derivatives) in the formulas above exist. Furthermore, the $\omega$-dependent quantities  can be evaluated a.s., i.e., for almost all $\omega \in \Omega$.
\end{assumption}

\section{Multilevel Monte Carlo}
\label{sec:MLMC}
\newcommand{\chlvl}[2]{\smash{I_{#1}^{#2}}}
\newcommand{\Lmax}{L}
Since we wish to be able to deal with high-dimensional or irregularly behaving stochastic spaces, we propose to evaluate the expected value in (\ref{eq:optcond_lagr}) using an MLMC method. For a given control input $u \in U$, the so-called \hl{quantity of interest} is then $Q \in U \otimes L^2(\Omega)$, where $Q(\omega) \triangleq \left(\diffp{c_\omega}{u}\right)^*[\lm_\omega]$. Its expected value is in $U$. The dependence of $Q$ on $u$ is omitted in the notation in this part of the text.
Because Q depends on the solution of PDEs, one cannot generate exact samples $Q(\omega) \in U$. Instead, we consider a hierarchy of spaces $U_0 \subset U_1 \subset \cdots 
\subset U$ in which we can generate samples $Q_\ell(\omega) \in U_\ell$ of an approximation $Q_\ell$ of $Q$. The subscript $\ell$ is a measure of the accuracy of the approximation, which in our case corresponds to the level of discretization that the PDEs are solved on.

Each of the spaces $U_\ell$ are endowed with inner products $\ip{\und}{\und}_\ell$.
Prolongation operators $\chlvl{\ell}{\ell + 1} : U_\ell \rightarrow U_{\ell+1}$ and restriction operators $\chlvl{\ell+1}{\ell} : U_{\ell+1} \rightarrow U_{\ell}$ are used to map between the discretization levels.
Let $\chlvl{\ell}{\ell'} \triangleq \chlvl{\ell'-1}{\ell'}\chlvl{\ell'-2}{\ell'-1} \cdots \chlvl{\ell}{\ell+1}$ if $\ell < \ell'$ and the analogue if $\ell > \ell'$. We require that \begin{equation}
\forall u \in U_\ell, u'\in U_{\ell'} : \ip{\chlvl{\ell}{\ell'}u}{u'}_{\ell'} = \ip{u}{\chlvl{\ell'}{\ell}u'}_{\ell}.
\label{eq:chlvl_adjoint}
\end{equation}
Furthermore, let $\chlvl{\ell}{} : U_\ell \rightarrow U$ denote the inclusion of $U_\ell$ into $U$.

Define $a \lesssim b$ as $a \leq cb$ with $c>0$ some constant independent of $a$ and $b$, and $a \eqsim b$ as $a \lesssim b$ and $b \lesssim a$. The following is assumed to hold.
\begin{assumption}
	The numerical scheme has a weak order of convergence $\rho$, i.e.,
	$\norm{\mean{\chlvl{\ell}{}Q_\ell - Q}} \newline \lesssim 2^{-\rho\ell} \label{eq:disc_error}$
	with $\rho > 0$.
	The computational cost for a single sample, denoted $\mathcal{C}_\ell$, satisfies
	$\mathcal{C}_\ell \lesssim 2^{\kappa\ell} \label{eq:sample_cost}$
	for some constant $\kappa$. 
	\label{as:rhokappa}
\end{assumption}
\noindent Both $\rho$ and $\kappa$ depend on the algorithm employed to solve the PDE.

\newcommand{\YMC}{\ensuremath{\hat{Y}^\textup{MC}}}
\newcommand{\QMLMC}{\ensuremath{\hat{Q}^\textup{ML}_{\vec{n}}}}
\newcommand{\nvec}{\{n_\ell\}}
The MLMC method \cite{cliffe2011, giles2015} recursively estimates an expected value on a finer level as an expected value on a coarser level (acting as a control variate) combined with a corrective term. This leads to a telescopic sum decomposition
\begin{equation}
\mean{Q_{L}} = \chlvl{0}{L}\mean{Q_{0}} + \sum\limits_{\ell=1}^{L}\chlvl{\ell}{L}\mean{Q_{\ell} - \chlvl{\ell-1}{\ell}Q_{{\ell-1}}} = \sum\limits_{\ell=0}^{L}\chlvl{\ell}{L}\mean{Y_\ell}
\label{eq:telescopic_sum}
\end{equation}
where $Y_\ell \triangleq Q_{\ell} - \chlvl{\ell-1}{\ell}Q_{{\ell-1}}$ if $\ell > 0$, and $\chlvl{-1}{0}$ and $Q_{-1}$ defined in some way such that $\chlvl{-1}{0}Q_{-1} = 0 \in U_0$.
This reduces the cost by avoiding the direct estimation of $\mean{Q_{L}}$ on the finest level.
On level $\ell$, $\mean{Y_\ell}$ is estimated using the ordinary Monte Carlo (MC) method with $n_\ell$ samples, yielding
\begin{equation}
\YMC_{\ell, n_\ell} \triangleq \frac{1}{n_\ell}\sum\limits_{i=1}^{n_\ell} Y_\ell(\omega_i) = \frac{1}{n_\ell}\sum\limits_{i=1}^{n_\ell} \Big(Q_{\ell}(\omega_i) - \chlvl{\ell-1}{\ell}Q_{{\ell-1}}(\omega_i)\Big).\label{eq:MC_def1Y}
\end{equation}
It is important to use the same stochastic realization $\omega_i$ on both levels for each sample of $Y_\ell$ to ensure a high correlation. The MLMC estimator is then defined as
\begin{equation}
\QMLMC \triangleq \sum\limits_{\ell=0}^{L} \chlvl{\ell}{L}\YMC_{\ell,n_\ell} \label{eq:MLMC_def2Q}
\end{equation}
with $\vec{n} = \nvec_{\ell = 0}^L$. All expectations $\mean{Y_\ell}$ in (\ref{eq:telescopic_sum}) should be estimated independently, ensuring that (\ref{eq:MLMC_def2Q}) is an unbiased estimator for $Q_L$, i.e., $\smash{\mathbb{E}[{\QMLMC}] = \mean{Q_{L}}}$.

\subsection{Mean square error}
\label{sec:MLMC/MSE}
The mean square error (MSE) of $\chlvl{L}{}\QMLMC$ as an estimator for $\mean{Q}$ can be characterized, see \cite{cliffe2011}, as follows
\begin{align}
\mean{\norm{\chlvl{L}{}\QMLMC - \mean{Q}}^2} 
&= \mean{\norm{\chlvl{L}{}\QMLMC - \chlvl{L}{}\mean{\QMLMC} + \chlvl{L}{}\mean{\QMLMC} - \mean{Q}}^2} \nonumber \\
&=\underbrace{
\mean{\norm{\chlvl{L}{}\QMLMC -\chlvl{L}{}\mean{\QMLMC}}^2}
}_\text{Stochastic error}
+
\underbrace{
\norm{\chlvl{L}{}\mean{\QMLMC} - \mean{Q}}^2
}_\text{Bias}
\label{eq:MLMC_error}
\end{align}
Since (\ref{eq:MLMC_def2Q}) is an unbiased estimator for $Q_L$, the bias term can be expressed as
\begin{equation}
\norm{\mean{\chlvl{L}{}Q_L - Q}}^2.
\label{eq:MLMC_bias}
\end{equation} 
This term is due to the discretization error. It can be decreased by solving the PDE using a finer discretization, i.e., by increasing $L$. The stochastic error can be further worked out as
\newcommand{\physu}{{D_U}}
\begin{align}
\mean{\norm{\chlvl{L}{}\QMLMC - \chlvl{L}{}\mean{\QMLMC}}^2}
&= \mean{\norm{
	\sum\limits_{\ell=0}^{L} (
	\chlvl{\ell}{}\YMC_{\ell,n_\ell} - \chlvl{\ell}{}\mean{\YMC_{\ell,n_\ell}}
	)
}^2} \nonumber\\
&= \sum\limits_{\ell=0}^{L}
	\mean{\norm{
	\chlvl{\ell}{}\YMC_{\ell,n_\ell} - \chlvl{\ell}{}\mean{\YMC_{\ell,n_\ell}}
	}^2}
 \nonumber\\
&= \sum\limits_{\ell=0}^{L}
	\int_\physu
	\var{ \chlvl{\ell}{}\YMC_{\ell,n_\ell} }
	\d{x} \nonumber\\
&= \sum\limits_{\ell=0}^{L} \frac{1}{n_\ell}
	\int_\physu
	\var{ \chlvl{\ell}{}Y_\ell }
	\d{x},
\label{eq:stochastic_error}
\end{align}
where $\physu$ represents the physical domain on which the control $u\in U$ is defined.
Clearly, the stochastic error can be decreased by taking more samples. 
Let $V_\ell \triangleq \int_\physu \var{ \chlvl{\ell}{}Y_\ell } \d{x}$. We assume the following:
\begin{assumption}
	The decay of $V_\ell$ satisfies $V_\ell \lesssim 2^{-\phi\ell}$ for some $\phi > 0$.
	\label{as:phi}
\end{assumption}
A bound of some prescribed value $\epsilon$ on the RMSE can be achieved by bounding the stochastic error term in (\ref{eq:MLMC_error}) by $\theta \epsilon^2$ and the bias term by $(1-\theta)\epsilon^2$ for some $\theta \in [0,1]$.

\subsection{Cost}
\label{sec:MLMC/cost}
\newcommand{\mlcost}{\mathcal{C}^\text{ML}_{\vec{n}}}
Denoting the cost of taking a sample of $Y_\ell$ as $\mathcal{C}_\ell$, the cost $\mlcost$ of the MLMC estimator can be expressed as
$\mlcost = \sum_{\ell = 0}^{L} n_\ell \mathcal{C}_\ell.$
Minimizing this cost such that (\ref{eq:stochastic_error}) is bounded by $\theta\epsilon^2$, i.e., such that $\sum_{\ell=0}^{L}n_\ell^{-1}V_\ell = \theta\epsilon^2$, and rounding upward, yields the optimal number of samples
\begin{equation}
n_\ell = \Bigg\lceil \frac{1}{\theta\epsilon^2} \sqrt{V_\ell\mathcal{C}_\ell^{-1}} \sum\limits_{i=0}^{L} \sqrt{V_i\mathcal{C}_i }\Bigg\rceil. \label{eq:MLMC_n}
\end{equation}
In order to obtain numerical values for $\vec{n}$, we need to estimate the values $V_\ell$. Note that $V_\ell \triangleq \int_D \var{ \chlvl{\ell}{}Y_\ell } \d{x} \approx \int_D \var{Y_\ell} \d{x}$. The integral can be approximated using an appropriate quadrature formula in $U_\ell$. The variance is estimated using some warm-up samples. For the finest levels, one can extrapolate using an estimation of the decay rate $\gamma$ in Assumption \ref{as:phi}. This is advantageous, since the number of samples that should be taken at those levels may be smaller than any reasonable number of warm-up samples.

Under Assumptions \ref{as:rhokappa} and \ref{as:phi}, the cost of the MLMC method can be bounded as presented in $\cite{cliffe2011, giles2015}$:
\begin{theorem}[Multilevel Monte Carlo cost]
	Suppose that there are positive constants $\rho, \phi, \kappa > 0$ such that $\rho > \frac{1}{2} \min (\phi, \kappa)$ and
	\begin{align*}
	\norm{\mean{\chlvl{\ell}{}Q_\ell - Q}} \lesssim 2^{-\rho\ell} &&
	V_\ell \lesssim 2^{-\phi\ell} &&
	\mathcal{C}_\ell \lesssim 2^{\kappa\ell}
	\end{align*}
	Then, for any $\epsilon < e^{-1}$, a value $L$ and a sequence $\vec{n} = \{n_\ell\}_{\ell = 0}^L$ exist such that the MSE
	\begin{equation*}
	\mean{\norm{\chlvl{L}{}\QMLMC - \mean{Q}}^2}  < \epsilon^2
	\end{equation*}
	and the cost 
	\begin{equation}
	\mlcost \lesssim
	\left\{\begin{array}{ll}
	\epsilon^{-2} &\text{if } \phi > \kappa \\
	\epsilon^{-2}(\log \epsilon)^2 &\text{if } \phi = \kappa \\
	\epsilon^{-2-(\kappa-\phi)/\rho} &\text{if } \phi < \kappa \\
	\end{array}\right.
	\label{eq:MLMC_cost}
	\end{equation}
	\label{theorem:MLMC}
\end{theorem}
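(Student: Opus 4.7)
The plan is to follow the standard route used in \cite{cliffe2011, giles2015}: control the bias via the weak error assumption, balance the per-level work against the per-level variance to choose $n_\ell$, and then sum the resulting geometric series in three regimes governed by the sign of $\phi - \kappa$.

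First, I would use the decomposition (\ref{eq:MLMC_error}) together with the expression (\ref{eq:MLMC_bias}) for the bias, and require each piece to be bounded by $\theta\epsilon^2$ and $(1-\theta)\epsilon^2$ respectively for any fixed $\theta \in (0,1)$. Applying the weak-convergence hypothesis $\|\mathbb{E}[\chlvl{\ell}{}Q_\ell - Q]\| \lesssim 2^{-\rho\ell}$, I would choose $L$ to be the smallest integer for which $2^{-2\rho L} \lesssim (1-\theta)\epsilon^2$; this gives $2^L \eqsim \epsilon^{-1/\rho}$ up to a constant. With $L$ fixed, the Lagrangian minimization of $\sum_\ell n_\ell \mathcal{C}_\ell$ subject to $\sum_\ell n_\ell^{-1} V_\ell = \theta\epsilon^2$, which is already performed to arrive at (\ref{eq:MLMC_n}), produces the optimal $n_\ell$, and substituting into $\mlcost = \sum_\ell n_\ell \mathcal{C}_\ell$ yields
\begin{equation*}
\mlcost \lesssim \epsilon^{-2}\Bigl(\sum_{\ell=0}^{L} \sqrt{V_\ell \mathcal{C}_\ell}\Bigr)^{\!2} + \sum_{\ell=0}^{L} \mathcal{C}_\ell,
\end{equation*}
where the second summand is the contribution of the ceiling in (\ref{eq:MLMC_n}).

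Next, using Assumptions \ref{as:rhokappa} and \ref{as:phi}, one has $\sqrt{V_\ell \mathcal{C}_\ell} \lesssim 2^{(\kappa-\phi)\ell/2}$, so the key geometric sum $\sum_{\ell=0}^{L} 2^{(\kappa-\phi)\ell/2}$ behaves like $O(1)$ when $\phi > \kappa$, like $O(L) = O(|\log\epsilon|)$ when $\phi = \kappa$, and like $O(2^{(\kappa-\phi)L/2}) = O(\epsilon^{-(\kappa-\phi)/(2\rho)})$ when $\phi < \kappa$. Squaring and multiplying by $\epsilon^{-2}$ gives exactly the three bounds in (\ref{eq:MLMC_cost}).

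The main obstacle is verifying that the ceiling contribution $\sum_{\ell=0}^L \mathcal{C}_\ell \lesssim 2^{\kappa L} \lesssim \epsilon^{-\kappa/\rho}$ is absorbed into the leading term in every regime; this is precisely where the hypothesis $\rho > \tfrac{1}{2}\min(\phi,\kappa)$ is needed. Indeed, in the regimes $\phi \geq \kappa$ one has $\min(\phi,\kappa) = \kappa$, and the condition $\kappa/\rho < 2$ forces $\epsilon^{-\kappa/\rho} \ll \epsilon^{-2}$. In the regime $\phi < \kappa$ one has $\min(\phi,\kappa) = \phi$, and $\phi/\rho < 2$ is equivalent to $\kappa/\rho < 2 + (\kappa-\phi)/\rho$, so again the ceiling term is dominated by the leading $\epsilon^{-2-(\kappa-\phi)/\rho}$ contribution. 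The restriction $\epsilon < e^{-1}$ ensures that the logarithmic factor $|\log\epsilon|$ in the boundary case is positive and that $L \geq 1$, so the asymptotic bounds are meaningful.
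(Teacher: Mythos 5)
Your proposal is correct and follows exactly the standard Cliffe--Giles argument that the paper itself invokes (the paper gives no proof of its own, only a remark that the scalar-case proof of \cite{cliffe2011} carries over verbatim since all hypotheses are scalar). The reconstruction is sound in all three regimes, including the correct identification of where $\rho > \tfrac{1}{2}\min(\phi,\kappa)$ is used to absorb the $\sum_\ell \mathcal{C}_\ell$ contribution from the ceiling in (\ref{eq:MLMC_n}).
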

\begin{remark}
	This theorem was proven in \cite{cliffe2011} for scalar valued quantities of interest. However, since all assumptions in Theorem \ref{theorem:MLMC} are scalar and of the same form as the assumptions in \cite{cliffe2011}, the proof is completely analogous to the one in \cite{cliffe2011}, which is also noted in \cite{cliffe2011, giles2015}. 
\end{remark}
\begin{remark}
	The value for $L$ in the theorem is such that the bias term (\ref{eq:MLMC_bias}) is small enough. In fact, as is immediately clear from \cite{cliffe2011}, the theorem holds iff $L$ is chosen dependent on $\epsilon$ in such a way that the bias follows $\epsilon$, i.e., iff $\norm{\mean{\chlvl{L}{}Q_L - Q}}^2 \simeq \epsilon^2$.
\end{remark}

\subsection{MLMC gradient estimator}
The gradient $\nabla\rJ_L(u_L) \in U_L$ for input $u_L \in U_L$ estimated using MLMC based on sample sets $(\Omega_0,\ldots,\Omega_L)$ containing $\vec{n} = (n_0,\ldots,n_L)$ samples respectively, can be written as
\newcommand{\Ql}[2]{\ensuremath{{Q}_{{#1}}(\chlvl{\Lmax}{#1}{u_L}, \omega) }}
\begin{equation} 
	\nabla\rJ_L(u_L) = \alpha u_L + \sum\limits_{\ell=0}^{L} \chlvl{\ell}{\Lmax}\frac{1}{n_\ell}\sum\limits_{\omega\in\Omega_\ell} \big(\Ql{\ell}{\ell} - \chlvl{\ell-1}{\ell}\Ql{\ell-1}{\ell}\big),
	\label{eq:grad_Q}
\end{equation}
where the dependence of $Q_\ell$ on an approximation of $u$ in $U_\ell$ has been made explicit. Let $\nabla \rJ_\ell^s(u_\ell) \triangleq \alpha u_\ell + n_s^{-1}\sum_{\omega \in \Omega_s} Q_\ell(u_\ell, \omega)$ 
denote the MC estimator based on $n_s$ samples $\Omega_s$ of $\alpha u_\ell + Q_\ell(u_\ell, \und)$ for some $u_\ell \in U_\ell$. 
Using the fact that the MLMC estimator is the identity for the deterministic $\alpha u_L$, (\ref{eq:grad_Q}) can be alternatively expressed as 
\begin{equation} 
	\nabla \rJ_L(u_L) = \chlvl{0}{\Lmax}\nabla\rJ_0^0(\chlvl{\Lmax}{0}{u_L}) + 
	\sum_{\ell = 1}^{L} \chlvl{\ell}{\Lmax} \big(
	\nabla\rJ_\ell^\ell(\chlvl{\Lmax}{\ell} {u_L}) - \chlvl{\ell-1}{\ell} \nabla\rJ_{\ell-1}^{\ell}(\chlvl{\Lmax}{\ell-1} {u_L})\big).
	\label{eq:grad_MLMC}
\end{equation}

Let $\rJ_L$ denote the cost functional estimated using MLMC with the same samples on the same levels as $\nabla \rJ_L$:
\begin{equation}
\rJ_L(u_L) = \rJ_0^0(\chlvl{\Lmax}{0}u_L) + 
\sum_{\ell = 1}^{L} 
\big(\rJ_\ell^\ell(\chlvl{\Lmax}{\ell} u_L) 
- \rJ_{\ell-1}^{\ell}(\chlvl{\Lmax}{\ell-1} u_L)\big),
\label{eq:cost_MLMC}
\end{equation}
where $\rJ_\ell^s: U_\ell \rightarrow \mathbb{R}$ denotes the Monte Carlo estimation of the cost functional in (\ref{eq:reduced_robust_problem}) using samples $\Omega_s$.
The notations are justified by the following theorem:
\begin{theorem}[exactness of the MLMC gradient]
	\label{theorem:MLMCexactness}
	The gradient $\nabla \rJ_L$ as given in (\ref{eq:grad_MLMC})
	is the exact gradient of $\rJ_L$ as given in (\ref{eq:cost_MLMC}), w.r.t. the inner product $\ip{\und}{\und}_L$.
\end{theorem}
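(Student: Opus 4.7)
The strategy is a direct application of the chain rule in Hilbert space combined with the adjoint property (\ref{eq:chlvl_adjoint}) of the restriction/prolongation operators. I would differentiate the expression for $\rJ_L$ in (\ref{eq:cost_MLMC}) term by term with respect to $u_L$ and then regroup the resulting factors to match (\ref{eq:grad_MLMC}).

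First I would establish a one-line chain-rule identity: for any differentiable $f : U_\ell \to \mathbb{R}$ with gradient $\nabla f(v_\ell) \in U_\ell$ taken with respect to $\ip{\und}{\und}_\ell$, the composition $g(u_L) \triangleq f(\chlvl{L}{\ell} u_L)$ has gradient (with respect to $\ip{\und}{\und}_L$)
$$\nabla g(u_L) = \chlvl{\ell}{L}\, \nabla f(\chlvl{L}{\ell} u_L) \in U_L.$$
This follows immediately from the Fr\'echet chain rule applied in an arbitrary direction $\tilde{u}_L \in U_L$,
$$Dg(u_L)[\tilde{u}_L] = Df(\chlvl{L}{\ell}u_L)[\chlvl{L}{\ell}\tilde{u}_L] = \ip{\nabla f(\chlvl{L}{\ell}u_L)}{\chlvl{L}{\ell}\tilde{u}_L}_\ell = \ip{\chlvl{\ell}{L}\nabla f(\chlvl{L}{\ell}u_L)}{\tilde{u}_L}_L,$$
where the final equality uses (\ref{eq:chlvl_adjoint}). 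Existence of the underlying Fr\'echet derivatives is covered by Assumption~1.

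Applying this identity to each of the terms $\rJ_0^0(\chlvl{L}{0}u_L)$, $\rJ_\ell^\ell(\chlvl{L}{\ell}u_L)$ and $\rJ_{\ell-1}^\ell(\chlvl{L}{\ell-1}u_L)$ that appear in (\ref{eq:cost_MLMC}), together with linearity of differentiation, yields
$$\nabla \rJ_L(u_L) = \chlvl{0}{L}\nabla\rJ_0^0(\chlvl{L}{0}u_L) + \sum_{\ell=1}^{L}\Big(\chlvl{\ell}{L}\nabla\rJ_\ell^\ell(\chlvl{L}{\ell}u_L) - \chlvl{\ell-1}{L}\nabla\rJ_{\ell-1}^\ell(\chlvl{L}{\ell-1}u_L)\Big).$$
Factoring $\chlvl{\ell}{L}$ out of each summand via the composition identity $\chlvl{\ell-1}{L} = \chlvl{\ell}{L}\chlvl{\ell-1}{\ell}$ delivers exactly (\ref{eq:grad_MLMC}).

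There is no genuine obstacle here: the content of the theorem is the simple observation that the adjoint relation (\ref{eq:chlvl_adjoint}) converts the natural Fr\'echet derivative of a restriction-precomposed functional into a gradient residing on the fine level $U_L$. The only point that requires care is the book-keeping of the two sets of level indices in (\ref{eq:cost_MLMC}), and the recognition that the deterministic $\alpha u_L$ contribution visible in the equivalent form (\ref{eq:grad_Q}) arises automatically from the telescoping cancellation of the $\alpha$-parts of $\nabla\rJ_\ell^\ell$ and $\nabla\rJ_{\ell-1}^\ell$.
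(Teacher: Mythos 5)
Your proposal is correct and follows essentially the same route as the paper: the key step in both is the chain-rule identity $Dg(u_L)[\tilde u_L] = \ip{\chlvl{\ell}{L}\nabla f(\chlvl{L}{\ell}u_L)}{\tilde u_L}_L$ obtained from the adjoint relation (\ref{eq:chlvl_adjoint}), applied term by term to (\ref{eq:cost_MLMC}). You merely spell out the final bookkeeping (factoring via $\chlvl{\ell-1}{L}=\chlvl{\ell}{L}\chlvl{\ell-1}{\ell}$) that the paper leaves as ``easily checked.''
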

\begin{proof}
	The previously defined $\nabla \rJ_\ell^s : U_\ell \rightarrow U_\ell$ is the gradient of $\rJ_\ell^s$, since the derivation in \S\ref{sec:optconds} holds for any probability space, and therefore also for a space $\Omega_s$ with a finite number of equally probable samples.
	Consider $u_L \in U_L$ given at the level $L$. The chain rule and (\ref{eq:chlvl_adjoint}) yield
	\begin{equation*}
	\diff{}{u_L} \rJ_\ell^k(\chlvl{\Lmax}{\ell}u_L)[h_L] = (\nabla \rJ_\ell^k(\chlvl{\Lmax}{\ell}u_L), \chlvl{\Lmax}{\ell}h_L)_\ell = (\chlvl{\ell}{\Lmax}\nabla \rJ_\ell^k(\chlvl{\Lmax}{\ell}u_L), h_L)_L,
	\label{eq:chain_gradient}
	\end{equation*}
	from which the theorem can easily be checked.
\end{proof}
In general, for a given accuracy $\epsilon$, there is a difference in the distribution $\vec{n}$ of the samples over the levels between the optimal MLMC estimator for the cost functional on one hand, and for its gradient on the other hand.
In this paper, the MLMC method is constructed for the gradient, since having the gradient at a known accuracy is most important for optimization purposes \cite{vanbarel2019robust}.

\section{MG/OPT}
\label{sec:MG/OPT}
\newcommand{\tJ}{\mathcal{J}}
MG/OPT is a multigrid approach to optimization problems \cite{nash2000, lewis2005}. It is inspired by and resembles the so-called FAS (full approximation storage) scheme applied to the optimality conditions \cite{borzi2009multigrid, briggs2000multigrid}. It requires approximate optimization problems at every MG/OPT level $k \in {0,\dots, K}$:
\begin{equation}
\min_{u_k \in U_k} \rJ_k(u_k).
\label{eq:opt_prob_k}
\end{equation}
An MG/OPT V-cycle improves a starting approximation $v_k$ of the solution $u_k$ that solves 
\begin{equation}
\min_{u_k \in U_k} \tJ_k(u_k) \triangleq \rJ_k(u_k) - \ip{\tau_k}{u_k}_k
\label{eq:opt_prob_k2}
\end{equation}
for a given correction term $\tau_k$, whose function it is to remedy the discrepancies between the approximate optimization problems (\ref{eq:opt_prob_k}). The V-cycle employs some convergent optimization algorithm for problem (\ref{eq:opt_prob_k2}) at level $k$, one iteration of which is denoted by $S_k$.
In multigrid terminology, $S_k$ fulfills the role of `smoother'.
The V-cycle is made explicit in Algorithm \ref{alg:MG/OPT} below. Initially it must be called as MG/OPT V-cycle($v_K$, $0$, $K$) and it calls itself recursively.
\begin{algorithm}[H]
	\caption{MG/OPT V-cycle($v_k$, $\tau_k$, $k$)}
	\label{alg:MG/OPT}
	\begin{algorithmic}[1]
		\If{$k=0$}
		\State \Return $v_{0}' \leftarrow 
		S_0^{\nu_0 + \mu_0}(v_0)$ (or exact minimizer $u_0$ of (\ref{eq:opt_prob_k2})) \Comment Trivial case
		\EndIf
		\State $v_{k,1} \leftarrow S_k^{\nu_k}(v_k)$ \Comment Presmoothing \label{line:presmoothing}
		\State $v_{k-1} \leftarrow \chlvl{k}{k-1} v_{k,1}$ \label{line:afterpresmoothing}
		\State $\tau_{k-1} \leftarrow \chlvl{k}{k-1}\tau_k + 
		\nabla \rJ_{k-1}(v_{k-1}) - \chlvl{k}{k-1}\nabla \rJ_k(v_{k,1})$ \label{line:correction_term}
		\State $v_{k-1}'\leftarrow$ MG/OPT V-cycle($v_{k-1}$, $\tau_{k-1}$, $k-1$)
		\Comment Update at the coarse level
		\State $d_{k-1}' \leftarrow v_{k-1}' - v_{k-1}$
		\State $d_k \leftarrow \chlvl{k-1}{k}d_{k-1}'$
		\State $v_{k,2} \leftarrow v_{k,1} + s d_k$ with $s\geq 0$, see \S\ref{sec:opt/linesearch}
		\State $v_{k}' \leftarrow S_k^{\mu_k}(v_{k,2})$ \Comment Postsmoothing
		\State \Return $v_{k}'$
	\end{algorithmic}
\end{algorithm}

\begin{remark}
	In the description of Algorithm \ref{alg:MG/OPT} above, all variables are named in such a way that they are assigned to only once, even across the algorithm's own recursive invocations. 
	This is the reason that the name of some variables is extended with the symbol $ '$. 
	Furthermore, each variable is known only by a single name; there is no aliasing. 
	In a real implementation, one would prefer to do the opposite by overwriting several variables to save memory and to prevent allocations of new memory.
\end{remark}

\begin{remark}
Line \ref{line:correction_term} can also be written as $\tau_{k-1} \leftarrow \nabla \rJ_{k-1}(v_{k-1}) - \chlvl{k}{k-1}\nabla \tJ_k(v_{k,1})$.
\end{remark}	

\subsection{Properties}
\begin{theorem} \label{theorem:MG/OPTgrad}
	Consider an MG/OPT level $k\geq 1$, then $\chlvl{k}{k-1}\nabla\tJ_k(v_{k,1}) = \nabla\tJ_{k-1}(v_{k-1})$, i.e., the gradient of the coarse grid problem in its initial point matches the restricted fine grid gradient at that point \cite{lewis2005}.
\end{theorem}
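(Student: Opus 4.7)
The plan is to prove this by direct substitution, unwinding the definitions of $\tJ_k$, $\tJ_{k-1}$, and especially the correction term $\tau_{k-1}$ set in Line \ref{line:correction_term} of Algorithm \ref{alg:MG/OPT}. The whole point of that line, after all, is precisely to enforce this matching property, so I expect the proof to be a short calculation rather than anything deep.

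First I would expand the left-hand side. By the definition of $\tJ_k$ in (\ref{eq:opt_prob_k2}) and linearity of the gradient, $\nabla \tJ_k(v_{k,1}) = \nabla \rJ_k(v_{k,1}) - \tau_k$, where I am using that the gradient of the linear functional $u_k \mapsto \ip{\tau_k}{u_k}_k$ with respect to $\ip{\und}{\und}_k$ is just $\tau_k$. Applying the restriction operator $\chlvl{k}{k-1}$ to both sides gives
\begin{equation*}
\chlvl{k}{k-1}\nabla \tJ_k(v_{k,1}) = \chlvl{k}{k-1}\nabla \rJ_k(v_{k,1}) - \chlvl{k}{k-1}\tau_k.
\end{equation*}

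Next I would expand the right-hand side analogously: $\nabla \tJ_{k-1}(v_{k-1}) = \nabla \rJ_{k-1}(v_{k-1}) - \tau_{k-1}$. Substituting the definition of $\tau_{k-1}$ from Line \ref{line:correction_term}, namely $\tau_{k-1} = \chlvl{k}{k-1}\tau_k + \nabla \rJ_{k-1}(v_{k-1}) - \chlvl{k}{k-1}\nabla \rJ_k(v_{k,1})$, the term $\nabla \rJ_{k-1}(v_{k-1})$ cancels and I am left with exactly $\chlvl{k}{k-1}\nabla \rJ_k(v_{k,1}) - \chlvl{k}{k-1}\tau_k$, which matches the expression derived for the left-hand side. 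Comparing the two completes the proof.

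There is really no main obstacle here; the statement is a tautology given how $\tau_{k-1}$ was defined. The only subtlety worth commenting on is the use of the identification between the dual space and the primal space under $\ip{\und}{\und}_k$ (as noted at the start of \S\ref{sec:optconds}), which is what allows us to treat $\tau_k \in U_k$ simultaneously as the linear functional $u_k \mapsto \ip{\tau_k}{u_k}_k$ and as its own gradient; and the fact that the restriction operator $\chlvl{k}{k-1}$ is linear, so it passes through the subtraction. No use of the MLMC structure of the gradient or of property (\ref{eq:chlvl_adjoint}) is needed for this particular theorem.
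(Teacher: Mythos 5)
Your proof is correct and follows essentially the same route as the paper's: expand $\nabla\tJ_{k-1}(v_{k-1})$ via the definition in (\ref{eq:opt_prob_k2}), substitute the expression for $\tau_{k-1}$ from Line \ref{line:correction_term}, and observe that the $\nabla\rJ_{k-1}(v_{k-1})$ terms cancel, leaving $\chlvl{k}{k-1}\nabla\rJ_k(v_{k,1}) - \chlvl{k}{k-1}\tau_k = \chlvl{k}{k-1}\nabla\tJ_k(v_{k,1})$. Your remarks that the result is a direct consequence of how $\tau_{k-1}$ is defined and that neither (\ref{eq:chlvl_adjoint}) nor the MLMC structure is needed are also accurate.
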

\begin{proof}
	The statement follows from
	\begin{align*}
		\nabla\tJ_{k-1}(v_{k-1}) &= \nabla\rJ_{k-1}(v_{k-1}) - \tau_{k-1} \\ 
		& = \nabla\rJ_{k-1}(v_{k-1}) - \chlvl{k}{k-1}\tau_k -
			\nabla \rJ_{k-1}(v_{k-1}) + \chlvl{k}{k-1}\nabla \rJ_k(v_{k,1}) \\
		& = \chlvl{k}{k-1}\nabla \rJ_k(v_{k,1}) - \chlvl{k}{k-1}\tau_k = \chlvl{k}{k-1}\nabla\tJ_k(v_{k,1}). \qedhere
	\end{align*} 
\end{proof}
\begin{corollary}
	Assuming $S_k$ maps the optimal point $u_k$ to itself, then the MG/OPT V-cycle does so for $u_K$. Indeed, if $v_K = u_K$, the gradients $\nabla \tJ_k(v_k)$ and corrections $d_k$ are zero at all levels $k$.
\end{corollary}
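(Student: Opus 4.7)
The plan is to prove a slightly strengthened statement by induction on $k$: \emph{if $\nabla \tJ_k(v_k) = 0$ at the time MG/OPT V-cycle($v_k,\tau_k,k$) is called, then it returns $v_k$ unchanged, and the intermediate corrections $d_k$ vanish.} The corollary then follows by specializing to the outermost invocation, where $\tau_K = 0$ forces $\tJ_K = \rJ_K$, and $v_K = u_K$ gives $\nabla\tJ_K(v_K) = \nabla\rJ_K(u_K) = 0$.

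For the base case $k=0$, the hypothesis $\nabla\tJ_0(v_0) = 0$ together with the assumption that $S_0$ fixes optimal points (or that the exact minimizer is returned) immediately gives $v_0' = v_0$. For the inductive step, I would proceed through the lines of Algorithm \ref{alg:MG/OPT} in order. Presmoothing: since $\nabla\tJ_k(v_k)=0$, the smoother assumption gives $v_{k,1} = v_k$. Restriction: $v_{k-1} = \chlvl{k}{k-1} v_{k,1}$. For the correction term, rather than expanding line \ref{line:correction_term} directly, I would invoke Theorem \ref{theorem:MG/OPTgrad}, which yields
\begin{equation*}
\nabla\tJ_{k-1}(v_{k-1}) = \chlvl{k}{k-1}\nabla\tJ_k(v_{k,1}) = 0.
\end{equation*}
Thus the inductive hypothesis applies to the recursive call MG/OPT V-cycle($v_{k-1},\tau_{k-1},k-1$), which therefore returns $v_{k-1}' = v_{k-1}$, so $d_{k-1}' = 0$ and $d_k = \chlvl{k-1}{k} d_{k-1}' = 0$. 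The line search step then gives $v_{k,2} = v_{k,1} + s\cdot 0 = v_k$ regardless of the choice of $s\geq 0$. Finally, postsmoothing preserves $v_k$ once more (again by the smoother assumption applied to $\tJ_k$), so $v_k' = v_k$, completing the induction.

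There is no real obstacle; the proof is essentially bookkeeping. The only point that requires care is interpreting the assumption on $S_k$: for the argument to go through at intermediate levels where $\tau_k \neq 0$, the phrase ``$S_k$ maps the optimal point $u_k$ to itself'' has to be read as ``$S_k$ maps any critical point of the problem it is applied to (namely $\tJ_k$) to itself,'' since at those intermediate levels the stationary point of $\tJ_k$ is generally not the minimizer of $\rJ_k$. Given this reading, Theorem \ref{theorem:MG/OPTgrad} does all the real work by transporting the zero-gradient property from the fine level to the coarse level, and the induction closes in a single line.
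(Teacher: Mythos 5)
Your proof is correct and is essentially the paper's argument made explicit: the paper justifies the corollary with the single observation that, starting from $v_K=u_K$, Theorem \ref{theorem:MG/OPTgrad} propagates the zero gradient down the hierarchy so that every $\nabla\tJ_k(v_k)$ and every $d_k$ vanishes, which is exactly the induction you carry out. Your remark on reading the smoother assumption as fixing critical points of the \emph{corrected} problem $\tJ_k$ is consistent with the paper, since $u_k$ is there defined as the minimizer of $\tJ_k$ in (\ref{eq:opt_prob_k2}).
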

It is also instructive to state the following theorem from \cite{borzi2009multigrid}.
\begin{theorem} \label{theorem:descent}
	Assume descent happens at MG/OPT level $k-1$, then the returned search direction $d_k$ is a descent direction at MG/OPT level $k$, i.e,
	$$\tJ_{k-1}(v_{k-1}') < \tJ_{k-1}(v_{k-1}) \implies \ip{\nabla\tJ_k(v_{k,1})}{d_k}_k < 0$$
	if $\hess{\tJ_{k-1}}$ is positive definite on a line connecting $v_{k-1}$ and $v_{k-1}'$.
\end{theorem}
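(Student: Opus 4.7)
The plan is to rewrite the inner product $\ip{\nabla\tJ_k(v_{k,1})}{d_k}_k$ on the coarse level, exploiting the adjoint relationship between restriction and prolongation, and then to invoke Theorem~\ref{theorem:MG/OPTgrad} to bring $\tJ_{k-1}$ into play. After that, a second-order Taylor expansion of $\tJ_{k-1}$ along the segment between $v_{k-1}$ and $v_{k-1}'$ will convert the descent hypothesis into strict negativity of the inner product.

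Concretely, I would first use property (\ref{eq:chlvl_adjoint}) together with $d_k = \chlvl{k-1}{k}d_{k-1}'$ to obtain
\begin{equation*}
\ip{\nabla\tJ_k(v_{k,1})}{d_k}_k = \ip{\chlvl{k}{k-1}\nabla\tJ_k(v_{k,1})}{d_{k-1}'}_{k-1}.
\end{equation*}
By Theorem~\ref{theorem:MG/OPTgrad}, the restricted fine gradient $\chlvl{k}{k-1}\nabla\tJ_k(v_{k,1})$ equals the coarse gradient $\nabla\tJ_{k-1}(v_{k-1})$, so the expression reduces to $\ip{\nabla\tJ_{k-1}(v_{k-1})}{v_{k-1}'-v_{k-1}}_{k-1}$. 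The remaining task is therefore purely one-level: show that this coarse-level directional derivative is negative whenever $\tJ_{k-1}$ strictly decreases from $v_{k-1}$ to $v_{k-1}'$.

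For that, I would apply the second-order Taylor formula along the line segment, writing
\begin{equation*}
\tJ_{k-1}(v_{k-1}') = \tJ_{k-1}(v_{k-1}) + \ip{\nabla\tJ_{k-1}(v_{k-1})}{d_{k-1}'}_{k-1} + \int_0^1 (1-t)\,\ip{d_{k-1}'}{\hess{\tJ_{k-1}}(v_{k-1}+t\,d_{k-1}')\,d_{k-1}'}_{k-1}\,dt.
\end{equation*}
The positive definiteness of $\hess{\tJ_{k-1}}$ on the segment makes the integral strictly positive (note that $d_{k-1}'\neq 0$, since the descent assumption rules out $v_{k-1}'=v_{k-1}$). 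Consequently,
\begin{equation*}
\ip{\nabla\tJ_{k-1}(v_{k-1})}{d_{k-1}'}_{k-1} < \tJ_{k-1}(v_{k-1}') - \tJ_{k-1}(v_{k-1}) < 0,
\end{equation*}
and combining this chain of inequalities with the earlier rewriting gives the claim.

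I do not expect a serious obstacle: the adjoint identity and Theorem~\ref{theorem:MG/OPTgrad} do the heavy lifting of transferring everything to the coarse level, and the Taylor step is standard. The only subtlety worth mentioning is the verification that $d_{k-1}'\neq 0$ (so that positive definiteness translates into strict inequality), which follows immediately from the hypothesis of strict descent.
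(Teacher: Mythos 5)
Your proposal is correct and follows essentially the same route as the paper's proof: the adjoint property (\ref{eq:chlvl_adjoint}) plus Theorem \ref{theorem:MG/OPTgrad} to transfer the inner product to level $k-1$, and a second-order Taylor expansion with the positive-definite Hessian to bound the directional derivative by the cost decrease. The only cosmetic differences are the order of the two steps and that the paper settles for the non-strict inequality $\ip{\nabla\tJ_{k-1}(v_{k-1})}{v_{k-1}'-v_{k-1}}_{k-1} \leq \tJ_{k-1}(v_{k-1}') - \tJ_{k-1}(v_{k-1})$, which already suffices since the descent hypothesis supplies the strict negativity.
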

\begin{proof}
	For some $v,z \in U_{k-1}$, consider the expansion
	\begin{equation*}
		\tJ_{k-1}(v+z) = \tJ_{k-1}(v) + \ip{\nabla\tJ_{k-1}(v)}{z}_{k-1} 
		+ \frac{1}{2}\int_{0}^{1}\ip{\hess{\tJ_{k-1}(v+tz)}[z]}{z}_{k-1}\d{t}.
	\end{equation*}
	Using the assumption on the Hessian and choosing $v = v_{k-1}$ and $z = v_{k-1}'-v_{k-1}$,
	\begin{equation*}
		\ip{\nabla \tJ_{k-1}(v_{k-1})}{v_{k-1}'-v_{k-1}}_{k-1}
		\leq 
		\tJ_{k-1}(v_{k-1}') - \tJ_{k-1}(v_{k-1}) < 0.
	\end{equation*}
	Now, using Theorem \ref{theorem:MG/OPTgrad} and (\ref{eq:chlvl_adjoint}),
	\begin{align*}
		\ip{\nabla\tJ_k(v_{k,1})}{d_k}_k &= \ip{\nabla\tJ_k(v_{k,1})}{\chlvl{k-1}{k}(v_{k-1}'-v_{k-1})}_k \\
		&= \ip{\chlvl{k}{k-1}\nabla\tJ_k(v_{k,1})}{v_{k-1}'-v_{k-1}}_{k-1} \\
		&= \ip{\nabla\tJ_{k-1}(v_{k-1})}{v_{k-1}'-v_{k-1}}_{k-1} < 0. \qedhere
	\end{align*}
\end{proof}

\subsection{Line search}
\label{sec:opt/linesearch}
Some of the benefits of MG/OPT depend crucially on the use of a line search along the coarse grid correction direction. This ensures global convergence of the method, meaning that the method converges to a local minimizer regardless of the starting guess. MG/OPT without line search might diverge in some cases \cite{nash2000}. If the problem is quadratic, $s=1$ is a good choice. This is also the case if one assumes that the problem can be well approximated by a quadratic cost functional in a neighborhood of the optimal point, with the control iterates in that neighborhood. 

Alternatively, in \cite{borzi2005convergence}, Borz\`i presents a more sophisticated a priori choice of $s$ under the assumption that the Hessian of $\tJ_k$ (equal of course to the Hessian of $\rJ_k$) is elliptic and Lipschitz continuous with known Lipschitz constant $L_k>0$. In that case, on MG/OPT level $k$, one should consider the choice
\newcommand{\vk}{\ensuremath{v_{k}}}
\begin{equation}
s = \min\Bigg\{2,\frac{\ip{-\nabla \tJ_k(\vk)}{d_k}_k}
{\ip{\hess{\tJ_k(\vk)}[d_k]}{d_k}_k + L_k\norm{d_k}^3_k} \Bigg\}.
\end{equation}

In the experiments in \S\ref{sec:numresults}, however, we use backtracking until we have descent, starting with $s=1$. If we have descent, the cost function and gradient evaluation performed to come to that conclusion can be reused as the first cost function gradient pair in the postsmoothing that comes after. Therefore, if the choice $s=1$ already yields descent, no additional work is done. This is more and more likely the further along the optimization one is, if one assumes the problem to be quadratic near the optimal point. This backtracking is thus likely to happen only in the early stages of the optimization, where gradient evaluations are still cheap, as will be discussed later. The costs incurred should therefore remain minimal. Either way, if the Hessian condition in Theorem \ref{theorem:descent} holds, we are guaranteed to find a suitable $s$ yielding descent.

\subsection{Specification of MG/OPT levels}
Assume that the gradient for the finest MG/OPT problem is calculated using MLMC for some requested RMSE $\epsilon$, resulting in a number of samples $(n_{0,K},\ldots,n_{K,K})$ that have to be taken at the levels $0,\ldots,K$. We denote the actual sample sets as $(\Omega_{0,K},\ldots,\Omega_{K,K})$. 
Gradients for the finest problem are returned on level $K$.
It then remains to specify the coarser optimization problems. We propose to let the coarser problems correspond to problems in which the expected values are also approximated using MLMC, but retaining only the coarser levels and a fraction $q$ of the samples on those levels.
More specifically, for $k \in \{0,\dots,K\}$, the MLMC estimation at MG/OPT level $k$ consists of $(n_{0,k},\ldots,n_{k,k}) = q^{K-k}(n_{1,K}, \dots, n_{k,K})$ samples $(\Omega_{0,k},\ldots,\Omega_{k,k})$ on levels $0,\ldots,k$. This is illustrated in Figure \ref{fig:class_detailed} for $K=3$. Quantities such as the gradient are returned on level $k$. 
Writing down the definition explicitly yields, in analogy with (\ref{eq:cost_MLMC}) and (\ref{eq:grad_MLMC})
\renewcommand{\Ql}[2]{\ensuremath{{Q}_{{#1}}(\chlvl{k}{#1}{u_k}, \omega) }}
\begin{align}
\rJ_k(u_k) &= \rJ_0^{0,k}(\chlvl{k}{0}u_k) + 
\sum_{\ell = 1}^{k} 
\big(\rJ_\ell^{\ell,k}(\chlvl{k}{\ell} u_k) 
- \rJ_{\ell-1}^{\ell,k}(\chlvl{k}{\ell-1} u_k)\big)
\label{eq:costk_MLMC}
\\
\nabla \rJ_k(u_k) &= \chlvl{0}{k}\nabla\rJ_0^{0,k}(\chlvl{k}{0}{u_k}) + 
\sum_{\ell = 1}^{k} \chlvl{\ell}{k} \big(
\nabla\rJ_\ell^{\ell,k}(\chlvl{k}{\ell} {u_k}) - \chlvl{\ell-1}{\ell} \nabla\rJ_{\ell-1}^{\ell,k}(\chlvl{k}{\ell-1} {u_k})\big),
\label{eq:gradk_MLMC}
\end{align}
where $\nabla \rJ_\ell^{s,k}(u_\ell) \triangleq \alpha u_\ell + n_{s,k}^{-1}\sum_{\omega \in \Omega_{s,k}} Q_\ell(\omega, u_\ell)$ is the gradient of $\rJ^{s,k}_\ell$.

Note that for any $\ell\leq k$, $\Omega_{\ell,k}$ needs not be a subset of $\Omega_{\ell,k+1}$, but numerical experiments show no disadvantage if it is. Furthermore, if $\Omega_{\ell,k} \subset \Omega_{\ell,k+1}$, the MG/OPT levels are nested, removing the need to calculate $\nabla \rJ_{k-1}(\chlvl{k}{k-1}v_k)$ in Line \ref{line:correction_term}, since it can be obtained from $\nabla \rJ_k(v_k)$. However, it is important that $\forall \ell \neq \ell': \Omega_{\ell,k} \cap \Omega_{\ell',k} = \phi$. On a single MG/OPT level $k$, reusing samples on different MLMC levels yields estimators $\YMC_{\ell,n_\ell}$ in (\ref{eq:MC_def1Y}) that are no longer independent, causing the theory of the previous chapter to break down.

\begin{figure}[tb]
	\begin{subfigure}[t]{.49\textwidth}
		\centering
		\caption{Number of samples $n_{\ell,k}$ on MLMC level $\ell$ for MG/OPT level $k$. $q = 2^{-2\rho}$ with $\rho$ the convergence order in (\ref{eq:disc_error}).}
		\begin{tikzpicture}[scale=0.8]
\def\h{0.6};
\def\w{1.5};

\draw[thick,->] (0,0) -- ({4.0*\w},0); 
\draw[thick,->] (0,0) -- (0,{4.0*\h}); 

\node at ({2.0*\w}, {-0.9}) {MG/OPT level $k$};
\node[rotate=90] at (-1.0, {1.5*\h}) {Discr. level $\ell$};

\foreach \x in {0,1,2,3}
\draw ({\w*\x+\w*0.5},0.1) -- ({\w*\x+\w*0.5},-0.1) node[anchor=north] {$\x$};
\foreach \y in {0,1,2,3}
\draw (0.1,{(\y+0.5)*\h}) -- (-0.1,{(\y+0.5)*\h}) node[anchor=east] {$\y$};

\node at ({\w*0.5},{0.5*\h}) {$q^3n_0$};

\node at ({\w*1.5},{1.5*\h}) {$q^2n_1$};
\node at ({\w*1.5},{0.5*\h}) {$q^2n_0$};

\node at ({\w*2.5},{2.5*\h}) {$qn_2$};
\node at ({\w*2.5},{1.5*\h}) {$qn_1$};
\node at ({\w*2.5},{0.5*\h}) {$qn_0$};

\node at ({\w*3.5},{3.5*\h}) {$n_3$};
\node at ({\w*3.5},{2.5*\h}) {$n_2$};
\node at ({\w*3.5},{1.5*\h}) {$n_1$};
\node at ({\w*3.5},{0.5*\h}) {$n_0$};

\end{tikzpicture}
		\label{fig:class_detailed}
	\end{subfigure}
	\quad
	\begin{subfigure}[t]{.49\textwidth}
		\centering
		\caption{Number of smoothing (optimization) steps $\nu_k$ and $\mu_k$ in Algorithm \ref{alg:MG/OPT} at each MG/OPT level $k$.}
		\begin{tikzpicture}[scale=0.8]
\def\h{0.6};

\draw[thick] (0,4*\h) -- (2,0) -- (4,{4.0*\h}); 

\def\presmoothsteps{{"0","1","2","4+4"}}
\def\postsmoothsteps{{"","2","1","1"}}

\foreach \x in {0,1,2,3}
\fill (\x*0.666666,{4*\h-\x*\h*1.333333}) circle (3pt) node[anchor=west] 
{\pgfmathparse{\presmoothsteps[\x]}$\pgfmathresult$}
;

\foreach \x in {1,2,3}
\fill ({2+\x*0.666666},{\x*\h*1.333333}) circle (3pt) node[anchor=west] 
{\pgfmathparse{\postsmoothsteps[\x]}$\pgfmathresult$}
;

\foreach \y in {0,1,2,3}
\draw[dashed] ({(3-\y)*2/3},{\y*\h*1.333333}) -- (-0.75,{\y*\h*1.333333}) node[anchor=east] {$\y$};
\node[rotate=90] at (-1.75, {2*\h}) {MG/OPT level $k$};

\end{tikzpicture}
		\label{fig:Vcycle}
	\end{subfigure}
	\caption{Illustration of the V-cycle structure for 4 MG/OPT levels, i.e., for $K=3$.}
\end{figure}
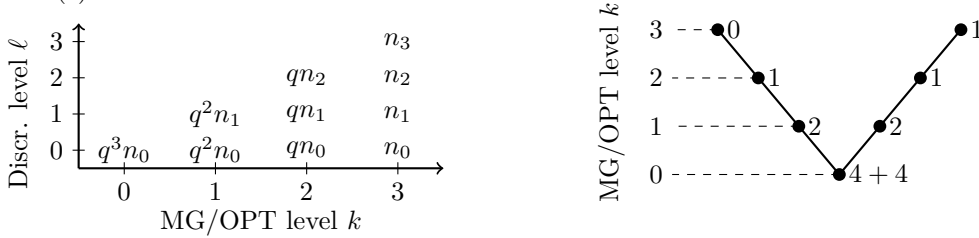

The number $q$ is derived from the order of convergence of the discretization scheme (\ref{eq:disc_error}). The discretization error on level $\ell-1$ can be expected to be $2^{-\rho(\ell-1)}/2^{-\rho\ell} = 2^\rho$ times larger than on level $\ell$. If one allows the stochastic error $\epsilon_{\text{stoch}} = \mathcal{O}(n^{-1/2})$ to also be that much larger, the number of samples can be reduced by a factor $q^{-1}=2^{2\rho}$.
Usually, $1\leq\rho\leq2$, implying $1/4\leq q\leq1/16$.
\begin{remark}
	Alternatively, one might simply request a RMSE that is $2^\rho$ times smaller for a coarser problem, following the rate of convergence of the discretization scheme. The resulting $n_{\ell,k}$ would be very similar. 
\end{remark}

\subsection{Cost}
\newcommand{\rf}{2}
Let $\mathcal{C}_{\text{g},k}$ denote the computational cost of a gradient evaluation at MG/OPT level $k$. Then 
$$\mathcal{C}_{\text{g},k} = q^{K-k}\sum_{\ell=0}^k n_k\mathcal{C}_\ell.$$
Clearly $\mathcal{C}_{\text{g},k-1} < q\,\mathcal{C}_{\text{g},k}$; a gradient evaluation at MG/OPT level $k-1$ is more than $q$ times cheaper than one at level $k$.
This leaves computational room to do twice as many smoothing steps at level $k-1$ than at level $k$. A possible smoothing schedule is illustrated in Figure \ref{fig:Vcycle}.
Denote the cost of a single MG/OPT V-cycle as $\mathcal{C}_\text{V}$ and the cost of a smoothing step at level $k$ as $\mathcal{C}_{\text{s},k}$. Assuming the smoother requires $c$ gradient evaluations, $\mathcal{C}_{\text{s},k} = c\,\mathcal{C}_{\text{g},k}$. Requiring $q<\frac{1}{2}$, we then have
\begin{align}
\mathcal{C}_\text{V} &= \sum_{k=0}^{K} \rf^{K-k}\mathcal{C}_{\text{s},k} + \mathcal{C}_{g,k} = \sum_{k=0}^{K} (c\rf^{K-k}+1)\mathcal{C}_{\text{g},k} \nonumber \\
&< \sum_{k=0}^{K} (c(\rf q)^{K-k} + q^{K-k})\mathcal{C}_{\text{g},K} < \left(\frac{c}{1-\rf q} + \frac{1}{1-q}\right)\mathcal{C}_{\text{g},K}.
\label{eq:costV}
\end{align}
The gradient cost not related to the smoother and that is incurred at each level comes from the need\footnote{It is possible that the presmoother can return the gradient in its final point at no additional cost, which allows this to be free.} to calculate $\tau_{k-1}$ at Line \ref{line:correction_term}. As explained in \S\ref{sec:opt/linesearch}, the line search usually does not come with additional gradient evaluations.
If one does not do any presmoothing, then $\tau_k$ can be simultaneously derived for all $k$ with a single gradient simulation at level $K$. This can be understood from, e.g., (\ref{eq:gradk_MLMC}). For a given $u_k \in U_k$, assuming nested MG/OPT levels, the samples of $Q_\ell$, $\ell = 0,\ldots,k-1$ required to calculate $\nabla \rJ_{k-1}(\chlvl{k}{k-1}u)$ are a subset of the samples required to calculate $\nabla \rJ_k(u_k)$. This leads to the modified cost
\begin{align}
\mathcal{C}_\text{V}' &= \mathcal{C}_{g,K} + \sum_{k=0}^{K} \rf^{K-k}\mathcal{C}_{\text{s},k} = \mathcal{C}_{g,K} + c\sum_{k=0}^{K} \rf^{K-k}\mathcal{C}_{\text{g},k} \nonumber \\
&< \mathcal{C}_{g,K} + c\sum_{k=0}^{K} (\rf q)^{K-k}\mathcal{C}_{\text{g},K} < \left(\frac{c}{1-\rf q}+1\right)\mathcal{C}_{\text{g},K}.
\label{eq:costValt}
\end{align}
Since $q<\frac{1}{2}$, the cost savings can be at most $\frac{1}{2}\mathcal{C}_{g,K}$.
The cost for $\rho=1$ and $\rho=2$ can be found below, for a general value of $c$ and for $c=2$, as will be the case in the experiments at the end of the paper.
\begin{equation}
	\begin{array}{ c c c c c c }
		\rho & q & \mathcal{C}_\text{V} & \mathcal{C}_\text{V}' & \mathcal{C}_\text{V}, c=2 & \mathcal{C}_\text{V}', c=2 \\ \hline 
		1 & 1/4 & (2c+\frac{4}{3})\mathcal{C}_{\text{g},K} & (2c+1)\mathcal{C}_{\text{g},K} & 5.33\mathcal{C}_{\text{g},K} & 5.00\mathcal{C}_{\text{g},K} \\  
		2 & 1/16 & (\frac{8}{7}c+\frac{16}{15})\mathcal{C}_{\text{g},K} & (\frac{8}{7}c+1)\mathcal{C}_{\text{g},K} & 3.35\mathcal{C}_{\text{g},K} & 3.29\mathcal{C}_{\text{g},K}
	\end{array}
\end{equation}

\subsection{Nonlinear conjugate gradient `smoother'}
\label{sec:opt/smoother}
\newcommand{\itr}[1]{\ensuremath{^{(#1)}}}
In theory, any convergent optimization algorithm can be used as the smoother. In the experiments in this paper, the nonlinear conjugate gradient (NCG) method is employed, both for pre- and postsmoothing. The smoother calls in Algorithm \ref{alg:MG/OPT} of the form $S_k^J(v\itr{0})$, with $v\itr{0} \in U_k$, perform $J$ iterations at discretization level $k$.
In an NCG iteration $j\leq J$, a search direction
$d\itr{j} \in U_k$ is obtained as the linear combination of the steepest descent direction and the previously used direction
$$d\itr{j} = -g\itr{j} + \beta\itr{j} d\itr{j-1},$$
where $g\itr{j} = \nabla \tJ_k(v\itr{j}) \in U_k$ denotes the gradient in iteration $j$. Initially we use $d\itr{0} = -g\itr{0}$. 
Several formulas for $\beta\itr{j}$ exist \cite{Borzi2012}, suited for different problems and having different convergence properties. We use the Dai-Yuan formula
\begin{equation} 
	\label{eq:Dai-Yuan}
	\beta\itr{j} = \frac{\|g\itr{j}\|^2_k}{\ip{d\itr{j-1}}{g\itr{j} - g\itr{j-1}}_k},
\end{equation}
which may offer certain globalization advantages \cite{Dai1999}.
The current iterate is then to be updated as 
$v\itr{j+1} = v\itr{j} + s\itr{j}d\itr{j}$
with $s\itr{j}$ some acceptable step size.
For a quadratic problem, a line search can be performed exactly using one additional gradient evaluation along the search direction, say in the point $v\itr{j} + d\itr{j}$. Furthermore, due to its linearity in that case, the gradient in the new point can be obtained for free as 
$$
\nabla \tJ_k(v\itr{j+1}) = \nabla \tJ_k(v\itr{j} + s\itr{j}d\itr{j}) = (1-s\itr{j})\nabla \tJ_k(v\itr{j}) + s\itr{j}\nabla \tJ_k(v\itr{j} + d\itr{j}).
$$
Each iteration therefore really requires just one gradient evaluation. In fact, for quadratic optimization problems, this whole procedure is equivalent to applying the standard conjugate gradient method on the symmetric linear system of optimality conditions \cite{nazareth2009}. This is the case for (most) common formulas for $\beta\itr{j}$, including (\ref{eq:Dai-Yuan}).
If the problem is not quadratic, the gradient function is nonlinear. An acceptable step size $s\itr{j}$ is then sought in a problem specific manner. Two possibilities are as follows:
\begin{itemize}
	\item Approximate the cost function along the search direction with an interpolating parabola using a second gradient evaluation and perform the procedure as described previously. Note that the interpolating parabola might be concave. A sufficient descent condition is thus needed. Furthermore, the gradient in the resulting point can no longer be obtained for free.
	\item Start with a well chosen step size, and do backtracking until Armijo's sufficient descent condition \cite{armijo1966} is satisfied.
\end{itemize}
The details and further implementational aspects of the smoother depend on the test case, and are elaborated in \S \ref{sec:numresults}.

\section{Qualitative comparison with existing methods}
\label{sec:qualcomp}
This section aims to provide a qualitative comparison of the method described previously with existing methods that have some resemblance to it. To that end, some methods are sketched in Figure \ref{fig:classes}. In the interest of keeping this section sufficiently concise, some slight simplifications are made in the descriptions of those methods.
\begin{figure}[h]
	\raggedright
	\begin{subfigure}[t]{.45\textwidth}
		\centering
		\caption{MLMC on gradient \cite{vanbarel2019robust}.}
		\begin{tikzpicture}[scale=0.8]
\def\h{0.6};

\draw[thick,->] (0,0) -- (4.0,0); 
\draw[thick,->] (0,0) -- (0,{4.0*\h}); 

\node at (2.0, {-0.9}) {MG/OPT level $k$};
\node[rotate=90] at (-1.0, {1.5*\h}) {Discr. level $\ell$};

\foreach \x in {1,2,3,4}
\draw ({\x-0.5},0.1) -- ({\x-0.5},-0.1) node[anchor=north] {$\x$};
\foreach \y in {1,2,3,4}
\draw (0.1,{(\y-0.5)*\h}) -- (-0.1,{(\y-0.5)*\h}) node[anchor=east] {$\y$};

\node at (3.5,{0.5*\h}) {$4$};
\node at (3.5,{1.5*\h}) {$3$};
\node at (3.5,{2.5*\h}) {$2$};
\node at (3.5,{3.5*\h}) {$1$};
\end{tikzpicture}
		\label{fig:class1}
	\end{subfigure}
	\quad
	\begin{subfigure}[t]{.45\textwidth}
		\centering
		\caption{MG/OPT + MLMC, [This paper].}
		\begin{tikzpicture}[scale=0.8]
\def\h{0.6};

\draw[thick,->] (0,0) -- (4.0,0); 
\draw[thick,->] (0,0) -- (0,{4.0*\h}); 

\node at (2.0, {-0.9}) {MG/OPT level $k$};
\node[rotate=90] at (-1.0, {1.5*\h}) {Discr. level $\ell$};

\foreach \x in {1,2,3,4}
\draw ({\x-0.5},0.1) -- ({\x-0.5},-0.1) node[anchor=north] {$\x$};
\foreach \y in {1,2,3,4}
\draw (0.1,{(\y-0.5)*\h}) -- (-0.1,{(\y-0.5)*\h}) node[anchor=east] {$\y$};

\node at (0.5,{0.5*\h}) {$1$};

\node at (1.5,{1.5*\h}) {$1$};
\node at (1.5,{0.5*\h}) {$2$};

\node at (2.5,{2.5*\h}) {$1$};
\node at (2.5,{1.5*\h}) {$2$};
\node at (2.5,{0.5*\h}) {$3$};

\node at (3.5,{3.5*\h}) {$1$};
\node at (3.5,{2.5*\h}) {$2$};
\node at (3.5,{1.5*\h}) {$3$};
\node at (3.5,{0.5*\h}) {$4$};

\end{tikzpicture}
		\label{fig:class2}
	\end{subfigure}\\
	\vspace{0.2cm}
	\begin{subfigure}[t]{.45\textwidth}
		\centering
		\caption{MG/OPT in stochastic space \cite{Kouri2014}. \phantom{phantom phantom phantom phantom phantom phantom phantom phantom}}
		\begin{tikzpicture}[scale=0.8]
\def\h{0.6};

\draw[thick,->] (0,0) -- (4.0,0); 
\draw[thick,->] (0,0) -- (0,{4.0*\h}); 

\node at (2.0, {-0.9}) {MG/OPT level $k$};
\node[rotate=90] at (-1.0, {1.5*\h}) {Discr. level $\ell$};

\foreach \x in {1,2,3,4}
\draw ({\x-0.5},0.1) -- ({\x-0.5},-0.1) node[anchor=north] {$\x$};
\foreach \y in {1,2,3,4}
\draw (0.1,{(\y-0.5)*\h}) -- (-0.1,{(\y-0.5)*\h}) node[anchor=east] {$\y$};

\node at (0.5,{3.5*\h}) {$1$};
\node at (1.5,{3.5*\h}) {$2$};
\node at (2.5,{3.5*\h}) {$3$};
\node at (3.5,{3.5*\h}) {$4$};

\end{tikzpicture}
		\label{fig:class3}
	\end{subfigure}
	\quad
	\begin{subfigure}[t]{.45\textwidth}
		\centering
		\caption{Multigrid on equations after stochastic discretization, e.g., \cite{Rosseel2012}.}
		\begin{tikzpicture}[scale=0.8]
\def\h{0.6};

\draw[thick,->] (0,0) -- (4.0,0); 
\draw[thick,->] (0,0) -- (0,{4.0*\h}); 

\node at (2.0, {-0.9}) {MG/OPT level $k$};
\node[rotate=90] at (-1.0, {1.5*\h}) {Discr. level $\ell$};

\foreach \x in {1,2,3,4}
\draw ({\x-0.5},0.1) -- ({\x-0.5},-0.1) node[anchor=north] {$\x$};
\foreach \y in {1,2,3,4}
\draw (0.1,{(\y-0.5)*\h}) -- (-0.1,{(\y-0.5)*\h}) node[anchor=east] {$\y$};

\node at (3.5,{3.5*\h}) {$4$};
\node at (2.5,{2.5*\h}) {$4$};
\node at (1.5,{1.5*\h}) {$4$};
\node at (0.5,{0.5*\h}) {$4$};

\end{tikzpicture}
		\label{fig:class4}
	\end{subfigure}
	\caption{Qualitative comparison between different approaches. The figures give, as a function of MG/OPT level $k$ (or the equivalent) and PDE discretization level $\ell$, an indication of the number of samples taken. A number $1$ indicates that only very few (e.g., $10^1$) samples are taken. A number $4$ indicates that many (e.g., $10^4$) samples are taken.}
	\label{fig:classes}
\end{figure}
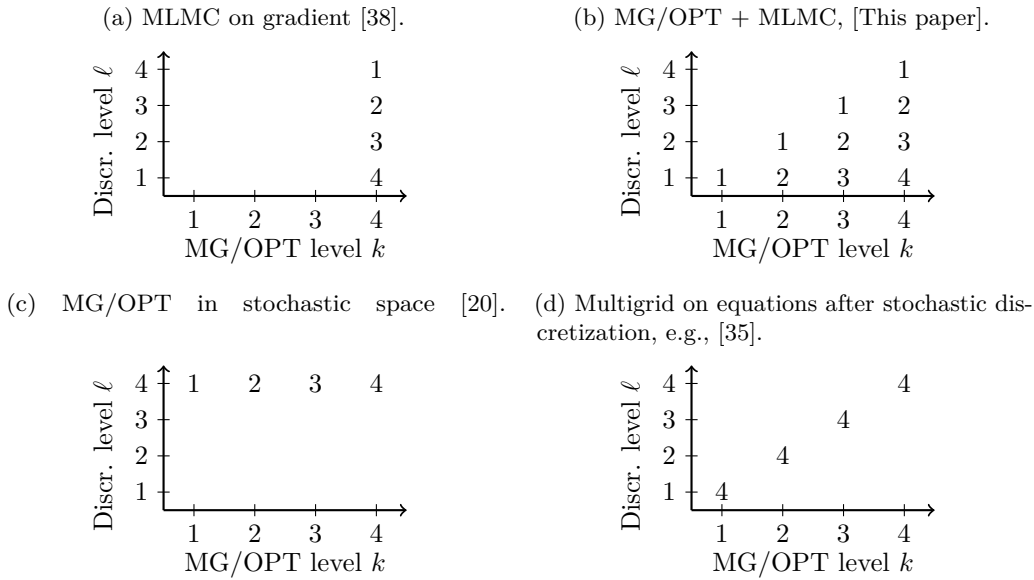

The first figure, Figure \ref{fig:class1}, shows a method that we previously developed in \cite{vanbarel2019robust}. It estimates the cost function and gradient using MLMC, but it performs the actual optimization steps on the finest level only. This exploits structure in the stochastic space, and was shown to reduce the computational cost by some orders of magnitude if compared to the naive single level MC method. Note that the latter method, if displayed in the format of Figure \ref{fig:classes}, would consist of merely a single `$4$' in the rightmost corner. However, the optimization itself might converge slowly, depending on the PDE constraint and the regularization of the cost function, regardless of the stochasticity of the problem.

Next, consider Figure \ref{fig:class3}, depicting a method by Kouri \cite{Kouri2014}. This method uses MG/OPT in a somewhat different way. Finer optimization problems correspond to estimations of the statistical quantities using more sample points. It must be noted that the method uses sparse grids instead of MC points to integrate over the stochastic domain. These sparse grids are nested to reduce the cost even further. There is only one discretization level for the space and time domains. This method, like the previous one, exploits structure in the stochastic space. Our suggestion of taking fewer samples on coarser MG/OPT levels was inspired by this paper. It was shown there that such a strategy is computationally advantageous. However, sooner or later, an optimization step on the fine MG/OPT level must happen. This entails taking many samples on the fine (and only) discretization level, shown as the `$4$' in the upper right corner of Figure \ref{fig:class3}. This is a problem that occurs also for the next method, represented in Figure \ref{fig:class4}. 

Figure \ref{fig:class4} shows the qualitative structure of a set of methods in which the number of realizations to estimate all statistical quantities is fixed first, essentially yielding a deterministic problem. 
Some multigrid method is then used on the resulting system of equations, containing all sampled realizations. Some methods use multigrid on the optimality conditions directly \cite{Rosseel2012, borzi2009multigrid}. Other methods might perform MG/OPT on the resulting deterministic optimization problem \cite{borzi2009multigrid}. The label for the horizontal axis in Figure \ref{fig:class4} may not be accurate for all methods; the label `Multigrid level $k$' might be better suited for some. These methods are often the first choice of people with a background in multigrid. Dealing with the uncertainties at the start leads to methods that essentially use the same number of samples on all discretization levels. In light of the discussions in \S\ref{sec:MLMC/cost}, this might be suboptimal. However, these methods can work well if the uncertainties are small, or low-dimensional, since then this fixed number of realizations might be small. 
We also note that, if the equations for all samples are solved simultaneously, as is the case in, e.g., \cite{Rosseel2012}, the memory requirement scales linearly with the number of samples. These methods do not really exploit structure in the stochastic space, but do exploit the PDE structure over the physical domain. The use of multigrid for dealing with PDEs is well established, and the speed at which the optimization problem itself is solved can thus be expected to be much improved.

Finally, we have the method described in this paper represented by Figure \ref{fig:class2}. The method attempts to combine the positive aspects of the previous methods. The structure in the stochastic domain is exploited by the use of MLMC, and the reduction of the number of samples on coarser MG/OPT levels. The structure inherent in the physical domains of the PDE is exploited by considering coarser discretization levels for coarser MG/OPT levels. It avoids the `$4$' in the upper right corner; at no point are many samples taken at the fine discretization level. 

\section{Robust optimization}
\label{sec:robopt}
Let $\tau$ denote the tolerance on the gradient norm. A practical algorithm to solve the optimization problem, with details on how to choose the gradient RMSE tolerance and a rudimentary stopping condition, is presented in Algorithm \ref{alg:opt}. The superscripts 
can be removed without changing the algorithm's functionality. Their only purpose is to allow a clearer description in what follows.
\newcommand{\cl}{r}
\newcommand{\gstart}{g^{(i)}_0}
\newcommand{\gend}{g^{(i)}}
\alglanguage{pseudocode}
\begin{algorithm}
	\caption{Robust optimization}
	\label{alg:opt}
	\begin{algorithmic}[1]
		\State input $\tau$, $\cl$, $\epsilon^{(1)}$, $i_\text{max}$, $v_K^{(0)}$, $\nabla \rJ_{\und}(\und)$
		\For {$i = 1,\ldots, i_\text{max}$}
		\State $(v_K^{(i)}, \norm{\gend}, \norm{\gstart}) \leftarrow $ MG/OPT V-cycle($v_K^{(i-1)}$, $0$, $K$)
		\Comment Using RMSE $\epsilon^{(i)}$
		\If{$\norm{\gend} \leq \tau$} \Comment Test convergence using V-cycle's samples
		\If{$\norm{\nabla \rJ_{K}(v_K^{(i)})} \leq \tau$} \Comment Using new random samples, RMSE $r\tau$ \label{line:convtest}
		\State \Return $v_K^{(i)}$
		\EndIf
		\EndIf
		\State $\eta^{(i)} \leftarrow \min\{1/2, \norm{\gend}/\norm{\gstart}\}$ \Comment Estimate the performance of the next V-cycle
		\State $\epsilon^{(i+1)} \leftarrow \max\{\cl\tau, \cl\eta^{(i)}\norm{\gend}\}$ \Comment Determine the next RMSE tolerance for the gradient \label{line:eps}
		\EndFor
	\end{algorithmic}
\end{algorithm}

Algorithm \ref{alg:opt} consists of a number (at most $i_\text{max}$) of MG/OPT V-cycles, which are described in Algorithm \ref{alg:MG/OPT} in \S\ref{sec:MG/OPT}.
At the start of each V-cycle, the sample sets, denoted earlier in \S\ref{sec:MG/OPT} by $\Omega_{k,\ell}$, are to be determined. During a V-cycle, the sample sets remain constant. 
A RMSE tolerance $\epsilon = \epsilon^{(i)}$ is used to determine $\vec{n}$ using (\ref{eq:MLMC_n}). 
A random seed is chosen, which, together with $\vec{n}$, then fully determines the sample sets $\Omega_{k,\ell}$. Recall that (\ref{eq:MLMC_n}) only ensures that the stochastic error is small enough. However, the RMSE (\ref{sec:MLMC/MSE}) also contains a bias term. For now, we don't worry about this, and assume that $K$ is such that the bias is small enough; see also Assumption \ref{as:K} below. 

The algorithm needs to be able to estimate how well the V-cycle did, i.e., how effective it was in reducing the gradient norm. In Algorithm \ref{alg:opt}, the call to MG/OPT V-cycle at Line $3$ therefore returns some additional information that was, for simplicity reasons, not mentioned in Algorithm \ref{alg:MG/OPT}.
The outputs $\smash{\norm{\gstart}}$ and $\smash{\norm{\gend}}$ provide the norm of the gradient at, respectively, the start and the end of the V-cycle, evaluated at the finest MG/OPT level using the sample sets employed in that V-cycle.\footnote{The first and last smoother call in the V-cycle may be able to return these gradients for free. Depending on the smoother, the final gradient might not be cheaply available. In that case, one may return the norm of the last available gradient instead, or produce an estimation in some other cheap way.}
The observed convergence rate of the V-cycle is then $\smash{\norm{\gend}/\norm{\gstart}}$. In Line 9, we let $\eta^{(i)}$ be this rate, unless it is worse than $1/2$, in which case we let $\eta^{(i)}$ be $1/2$. 
The purpose of $\eta^{(i)}$ is to serve as an indication of the convergence rate of V-cycle $i+1$. We ensure $\eta^{(i)} \leq 1/2$ to not be too pessimistic about the next V-cycle, even if the current V-cycle $i$ performed poorly. 

For optimization, a gradient is useful only if its accuracy is at least proportional to its norm. A smaller gradient is more easily distorted by a given amount of noise than a large gradient. Furthermore, the set of samples should be large enough such that the optimizer based on those samples would still work well for new random samples. Therefore, Algorithm \ref{alg:opt} attempts to have the RMSE satisfy
\begin{equation}
\epsilon^{(i)} \leq \cl\norm{g^{(i)}},
\label{eq:relRMSE}
\end{equation}
where the constant $\cl$
determines how large the relative RMSE $\epsilon^{(i)} / \norm{g^{(i)}}$ is allowed to be. In the experiments at the end, we set $r=1/2$. At the same time, $\epsilon^{(i)}$ cannot be too small, since then an unnecessarily large number of samples will be taken; see (\ref{eq:MLMC_n}). These ideas are behind Line $10$, where the RMSE $\epsilon^{(i+1)}$ for V-cycle $i+1$ is determined. 
If (\ref{eq:relRMSE}) holds, $\norm{g^{(i)}}$ can be expected to be a good indication for the actual gradient norm, i.e., the gradient norm as it would be if the full stochastic space $\Omega$ were considered \cite{vanbarel2019robust}.
Assuming $\eta^{(i)}$ is a good indicator for the performance of V-cycle $i+1$, we can expect
$\norm{g^{(i+1)}} \approx \eta \norm{g^{(i)}}$, and thus we need $\epsilon^{(i+1)} \leq \cl\norm{g^{(i+1)}} \approx \cl\eta \norm{g^{(i)}}$. Having $\eta^{(i)}\leq 1/2$ (see Line $9$), ensures that $\epsilon^{(i+1)}$ is sufficiently small to allow for some meaningful progress in the next V-cycle, even if the previous V-cycle performed poorly. Finally, since we are not interested in a gradient norm even smaller than the tolerance $\tau$, $r\tau$ is a lower bound on our gradient's RMSE requirement.
This gradual decrease of $\epsilon$, in tandem with the decrease of $\norm{g^{(i)}}$, ensures that
\begin{equation}
\epsilon^{(i)}  \simeq \norm{g^{(i)}}.
\label{eq:relRMSE2}
\end{equation}

\begin{remark}
	For the first V-cycle, no estimation of the gradient norm may be available. A reasonably large value of $\epsilon^{(1)}$ should then be provided by the user.
	A large value of $\epsilon^{(1)}$ has almost no effect on the total optimization time; it results in a few additional very cheap iterations to attain $\epsilon^{(i)} \leq \cl\norm{g^{(i)}}$. However, if $\epsilon^{(1)}$ is much smaller than the starting gradient norm, the first iterations are significantly more expensive than necessary, until the gradient norm has caught up with $\epsilon^{(i)}$.
\end{remark}

We now turn to the rudimentary stopping criterion specified in Lines $5$--$8$.
First $\smash{\norm{g^{(i)}}}$ is checked for convergence in Line $4$. 
This corresponds to checking the gradient of the iterate $\smash{v_K^{(i)}}$ evaluated using the same sample sets it was obtained with in the preceding V-cycle in Line $3$. 
This test is essentially free, as $\smash{\|g^{(i)}\|}$ was already obtained.
However, the most relevant performance metric is how well an iterate performs when checked against new samples. This is the purpose of Line $5$. Setting RMSE $\epsilon = r\tau$, we determine new values for $\vec{n}$ using (\ref{eq:MLMC_n}), and we choose new sample sets. With those sample sets, a single gradient $\smash{\nabla \rJ_{K}(v_K^{(i)})}$ is calculated in the tentative solution $\smash{v_K^{(i)}}$. This test is expensive as it costs a gradient calculation. The purpose of first executing Line $4$ is of course to make sure that the expensive test of Line $5$ is only attempted for iterates that actually have a chance at succeeding. 

\subsection{Cost}
\label{sec:opt/cost}
We discuss the cost as a function of the requested gradient tolerance $\tau$ under the assumptions that follow.
\begin{assumption}
	The norm of the gradient converges at least linearly, i.e., 
	$\norm{g^{(i)}} \lesssim \mu^{i}$ for some $\mu\in (0,1)$
	\label{as:linearconvergence}
\end{assumption}
This does not follow trivially from the linear convergence of, e.g., the smoothers or the full MG/OPT V-cycle, since the sample sets change from V-cycle to V-cycle. During each V-cycle a slightly different problem is thus considered. Theoretical convergence results exist for certain optimization algorithms that are fed imprecise gradient information. However, since we have not made any detailed assumptions regarding the underlying optimization scheme, this is included as an assumption in this paper.

\begin{assumption}
	A single triple $\rho, \phi, \kappa$ exists such that the assumptions in Theorem \ref{theorem:MLMC} hold uniformly, i.e., for the same constants implicit in $\lesssim$ in a neighborhood of the optimal point.
	\label{as:uniformity}
\end{assumption}
This can be considered a smoothness assumption. The assumption follows if the implicit constants do not diverge in a neighborhood of the optimal point. Each worst case constant can then be taken.
For $\phi$ in particular, the sequence (as a function of iteration $i$) of variances $V_\ell$ can be assumed to tend towards the variances $V_\ell$ at the optimal point for any $\ell$. This is observed in experiments in \cite{vanbarel2019robust}. 
From Assumption \ref{as:uniformity}, it follows that a single constant implicit in $\lesssim$ in expression (\ref{eq:MLMC_cost}) for the cost in Theorem \ref{theorem:MLMC} exists for all gradient calculations in a neighborhood of the optimal point.

The finest MG/OPT level $K$ is fixed when Algorithm \ref{alg:opt} starts. We must assume that it is chosen such that the bias (\ref{eq:MLMC_bias}) is small enough throughout the algorithm. This means that $K=L$ must depend on $\tau$ in such a way that the following assumption is satisfied:
\begin{assumption}
	The finest MG/OPT level $K$, equal to the finest MLMC level $L$, is chosen such that $\tau^2 \lesssim \norm{\mean{\chlvl{L}{}Q_L - Q}}^2 \leq r(1-\theta)\tau^2$ in a neighborhood of the optimizer $u$, where $\theta$ is the parameter described in \S\ref{sec:MLMC/MSE}.
	\label{as:K}
\end{assumption}
Following from the second remark under Theorem \ref{theorem:MLMC}, this assumption ensures that Theorem \ref{theorem:MLMC} holds for the gradient calculations where the RMSE tolerance $\epsilon^{(i)} = r\tau$, i.e., for the gradient calculations in the final V-cycle(s). At this point we have not discussed how $K$ must be determined in practice. Some possibilities of making the algorithm choose $K$ adaptively are discussed at the end of this section.

\begin{theorem}[Optimization cost]
	\label{theorem:MLMC_optcost}
	Let Assumptions \ref{as:linearconvergence}, \ref{as:uniformity}, and \ref{as:K} hold and let $\epsilon^{(1)}$ be independent of $\tau$.  
	The cost $\mathcal{C}_\text{opt}(\tau)$ for Algorithm \ref{alg:opt} to reach a gradient $\norm{g^{(k)}} \leq \tau$ is then
	\begin{equation}
	\mathcal{C}_\text{opt}(\tau) \lesssim 
	\left\{\begin{array}{ll}
	\tau^{-2} &\text{if } \phi > \kappa, \\
	\tau^{-2}(\log \tau)^2 &\text{if } \phi = \kappa, \\
	\tau^{-2-(\kappa-\phi)/\rho} &\text{if } \phi < \kappa, \\
	\end{array}\right. 
	\label{eq:tau_cost}
	\end{equation} 
	i.e., proportional to the cost of a single MLMC gradient evaluation with RMSE tolerance $\tau$. 
\end{theorem}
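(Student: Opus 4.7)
The plan is to combine three ingredients: (i) by Assumption \ref{as:linearconvergence} only $i_{*} \lesssim \log(1/\tau)$ V-cycles are required before $\|g^{(i)}\|\leq\tau$; (ii) by the per-V-cycle cost bounds (\ref{eq:costV})/(\ref{eq:costValt}), each V-cycle costs a constant multiple of a single finest-level gradient evaluation $\mathcal{C}_{\text{g},K}^{(i)}$ at RMSE $\epsilon^{(i)}$; (iii) under Assumptions \ref{as:uniformity}--\ref{as:K}, Theorem \ref{theorem:MLMC} applies uniformly in $i$ and bounds $\mathcal{C}_{\text{g},K}^{(i)}$ by the right-hand side of (\ref{eq:MLMC_cost}) with $\epsilon$ replaced by $\epsilon^{(i)}$. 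The total cost is then $\sum_i \mathcal{C}_{\text{g},K}^{(i)}$ plus one extra gradient for the convergence test in Line \ref{line:convtest}; the sum is geometric-like and will be shown to be dominated (up to a constant) by its last term, whose RMSE tolerance is $\simeq\tau$.

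First I would establish the tracking relation $\epsilon^{(i)} \leq C\max\{\cl\tau,\tilde\mu^{\,i}\}$ for some $C>0$ and $\tilde\mu\in(0,1)$. This follows from the update in Line \ref{line:eps}, the cap $\eta^{(i)}\leq 1/2$ that ensures $\epsilon^{(i+1)} \leq (\cl/2)\|g^{(i)}\|$ whenever $\|g^{(i)}\|>\tau$, and Assumption \ref{as:linearconvergence}. A finite, $\tau$-independent number of initial V-cycles may occur before $\epsilon^{(i)}$ catches up with $\cl\|g^{(i)}\|$ (since $\epsilon^{(1)}$ does not depend on $\tau$); their cost is $\tau$-independent and is absorbed in the implicit constant. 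In particular $i_{*} = \mathcal{O}(\log(1/\tau))$ and $\epsilon^{(i_{*})}\simeq\cl\tau$.

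Next I would combine $\mathcal{C}_{\text{V}}^{(i)} \lesssim \mathcal{C}_{\text{g},K}^{(i)}$ with the MLMC cost estimate $\mathcal{C}_{\text{g},K}^{(i)} \lesssim f(\epsilon^{(i)})$, where $f$ denotes the right-hand side of (\ref{eq:MLMC_cost}), uniform in $i$ by Assumptions \ref{as:uniformity}--\ref{as:K}. This yields
\begin{equation*}
\mathcal{C}_{\text{opt}}(\tau) \;\lesssim\; \sum_{i=1}^{i_{*}} f(\epsilon^{(i)}) \;+\; f(\cl\tau).
\end{equation*}
For $\phi>\kappa$, $f(\epsilon)\simeq\epsilon^{-2}$, so the sum is geometric with ratio $\tilde\mu^{-2}>1$ and dominated by its last term $\tilde\mu^{-2i_{*}}\simeq\tau^{-2}$. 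For $\phi=\kappa$, the extra factor $(\log\tilde\mu^{\,i})^{2}\simeq i^{2}$ multiplies the geometric series but does not change its last-term domination, giving $\tau^{-2}(\log\tau)^{2}$. For $\phi<\kappa$, $f(\epsilon)\simeq\epsilon^{-\beta}$ with $\beta=2+(\kappa-\phi)/\rho>2$, and the sum is even more sharply dominated by the last term. In each case we recover precisely (\ref{eq:tau_cost}).

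The main obstacle I expect is the careful bookkeeping in the second paragraph, namely propagating the tracking property $\epsilon^{(i)}\simeq\|g^{(i)}\|$ uniformly along the iteration rather than merely asymptotically as in (\ref{eq:relRMSE2}), and verifying that stochastic fluctuations of the MLMC-based gradient norm do not undermine the linear decay postulated in Assumption \ref{as:linearconvergence}. Assumption \ref{as:K} is also essential here, as it is precisely what makes Theorem \ref{theorem:MLMC} applicable to the final V-cycles, where the bias (\ref{eq:MLMC_bias}) becomes comparable to the stochastic error budget and could otherwise invalidate the MLMC cost estimate used in the summation.
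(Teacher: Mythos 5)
Your overall architecture matches the paper's: write $\mathcal{C}_\text{opt}(\tau)=\sum_i C_V^{(i)}$, reduce each V-cycle to a constant multiple of one finest-level gradient evaluation via (\ref{eq:costV})--(\ref{eq:costValt}), use $\epsilon^{(i)}\simeq\norm{g^{(i)}}$ together with Assumption \ref{as:linearconvergence} to make the sum geometrically dominated by its last term, and invoke Theorem \ref{theorem:MLMC} for that last term. Your more careful derivation of the tracking relation $\epsilon^{(i)}\lesssim\max\{\cl\tau,\tilde\mu^{\,i}\}$ is a welcome tightening of the paper's informal (\ref{eq:relRMSE2}).

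There is, however, one genuine flaw in step (iii). You bound $\mathcal{C}_{\text{g},K}^{(i)}$ by the right-hand side of (\ref{eq:MLMC_cost}) evaluated at $\epsilon^{(i)}$, i.e.\ by $f(\epsilon^{(i)})$. But Theorem \ref{theorem:MLMC} delivers that cost only when the finest level $L$ is chosen adaptively so that the bias tracks $\epsilon$ (see the second remark under the theorem). In Algorithm \ref{alg:opt}, $K=L$ is fixed once, sized for the \emph{final} tolerance $\tau$ via Assumption \ref{as:K}; the intermediate gradient evaluations sample all levels $0,\dots,L$ with counts from (\ref{eq:MLMC_n}), so their true cost is $\mathcal{C}_{\text{g},K}^{(i)}\simeq (\epsilon^{(i)})^{-2}\bigl(\sum_{\ell=0}^{L}\sqrt{V_\ell\mathcal{C}_\ell}\bigr)^2$, where the bracketed sum depends on $\tau$ through $L$ but not on $i$. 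For $\phi<\kappa$ this is $\simeq(\epsilon^{(i)})^{-2}\tau^{-(\kappa-\phi)/\rho}$, which exceeds your claimed bound $f(\epsilon^{(i)})=(\epsilon^{(i)})^{-2-(\kappa-\phi)/\rho}$ by the unbounded factor $(\epsilon^{(i)}/\tau)^{(\kappa-\phi)/\rho}$; similarly for $\phi=\kappa$ the logarithmic factor is $(\log\tau)^2$, not $(\log\epsilon^{(i)})^2$. So the displayed inequality $\mathcal{C}_\text{opt}(\tau)\lesssim\sum_i f(\epsilon^{(i)})+f(\cl\tau)$ is not justified (only the case $\phi>\kappa$ survives as written, because there the level sum converges uniformly in $L$).

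The repair is what the paper actually does: use the fixed-$L$ scaling $\mathcal{C}_{\text{g},K}^{(i)}\simeq(\epsilon^{(i)}/\epsilon^{(I)})^{-2}\,\mathcal{C}_{\text{g},K}^{(I)}$, apply Theorem \ref{theorem:MLMC} \emph{only} to the final V-cycle (where $\epsilon^{(I)}\simeq r\tau$ and Assumption \ref{as:K} makes the theorem applicable), and then sum the geometric series $\sum_i\mu^{2(I-i)}$. The conclusion is unchanged — the corrected per-term costs still telescope to $\lesssim\mathcal{C}_{\text{g},K}^{(I)}$ — but the per-iteration invocation of Theorem \ref{theorem:MLMC} at tolerance $\epsilon^{(i)}$ must be removed.
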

\begin{proof}
	The total cost is
	\begin{equation}
		\mathcal{C}_\text{opt}(\tau) = \sum_{i=0}^I C_V^{(i)}
	\end{equation}
	with $C_V^{(i)}$ the cost of V-cycle $i$, which is proportional to the cost $\mathcal{C}_{\text{g},K}^{(i)}$ of a gradient evaluation at level $K$; see (\ref{eq:costV}) and (\ref{eq:costValt}). 
	The cost $\mathcal{C}_{\text{g},K}^{(I)}$ of the gradient evaluations in the final V-cycle(s) is, due to Assumptions \ref{as:uniformity} and \ref{as:K}, given by Theorem \ref{theorem:MLMC}. Line \ref{line:eps} ensures $r\tau < \epsilon^{(i)}$, leading to
	\begin{equation}
	\mathcal{C}_{\text{g},K}^{(I)} \lesssim 
	\left\{\begin{array}{ll}
	\tau^{-2} &\text{if } \phi > \kappa, \\
	\tau^{-2}(\log \tau)^2 &\text{if } \phi = \kappa, \\
	\tau^{-2-(\kappa-\phi)/\rho} &\text{if } \phi < \kappa. \\
	\end{array}\right. 
	\end{equation}
	From (\ref{eq:MLMC_n}) and \S\ref{sec:MLMC/cost}, the cost of the other gradient evaluations depends on the RMSE $\epsilon^{(i)}$ as $\mathcal{C}_{\text{g},K}^{(i)} \simeq (\epsilon^{(i)}/\epsilon^{(I)})^{-2}\mathcal{C}_{\text{g},K}^{(I)}$ and thus from (\ref{eq:relRMSE2}), as $\mathcal{C}_{\text{g},K}^{(i)} \simeq  (\norm{g^{(i)}}/\norm{g^{(I)}})^{-2}\mathcal{C}_{\text{g},K}^{(I)}$. Finally, using Assumption \ref{as:linearconvergence}, one obtains
	\begin{equation*}
		\mathcal{C}_\text{opt}(\tau) = \sum_{i=0}^I C_V^{(i)} 
		\simeq \sum_{i=0}^I \mathcal{C}_{\text{g},K}^{(i)} 
		\simeq \sum_{i=0}^I \left(\frac{\norm{g^{(i)}}}{\norm{g^{(I)}}}\right)^{-2}\mathcal{C}_{\text{g},K}^{(I)}
		\simeq \sum_{i=0}^I \mu^{2I-2i}\mathcal{C}_{\text{g},K}^{(I)}
		\simeq \mathcal{C}_{\text{g},K}^{(I)}
	\end{equation*}
	since $\mu \in (0,1)$.
\end{proof}

\subsection{Full multigrid and adaptive choice of K}
The full multigrid (FMG) or nested iteration scheme consists of a succession of V-cycles, where the first V-cycle considers only the coarsest level and each subsequent V-cycle considers an additional finer level. Instead of the interpolation method used within the V-cycle itself, a higher order interpolation may be performed to map the result of the previous V-cycle to the finer level \cite{briggs2000multigrid}. The early V-cycles are much cheaper and serve to provide a good initial value for the next V-cycle. In Algorithm \ref{alg:opt}, early V-cycles are already cheaper since the requested tolerance on the gradient $\epsilon$ becomes stricter as the optimization progresses. Algorithm \ref{alg:opt} can in fact be considered to contain some FMG properties already. Since the cost of the last few V-cycles is comparable to the cost of all previous V-cycles (see \S\ref{sec:opt/cost}), the computational time is not expected to improve significantly by using FMG in this setting.

One advantage of an FMG-like implementation is the possibility of choosing $K$ adaptively. One could start with a small value for $K$. During each V-cycle, the bias is estimated. This can easily be done based on the differences calculated in the MLMC method \cite{cliffe2011, vanbarel2019robust}. If the bias is not small enough, i.e., if $\norm{\mean{\chlvl{K}{}Q_K - Q}}^2 \leq r(1-\theta)\tau^2$ does not hold, $K$ is incremented in the next V-cycle. Such an adaptive FMG-like scheme allows Assumption \ref{as:K} to be satisfied automatically.

\section{Numerical results}
\label{sec:numresults}
The proposed strategy is tested on three model problems. The first model problem consists of the Laplace equation where the right hand side (the source term) can be controlled at any point in the domain. This is the ubiquitous model problem analyzed in many papers, e.g., \cite{Ali2016, chen2014weighted, geiersbach2019stochastic, guth2019quasi, martin2019multilevel, martin2018analysis, Rosseel2012, vanbarel2019robust}. In the second problem, the flux at the boundary of the Laplace PDE is to be controlled by the Dirichlet condition at that boundary. This model problem is more challenging for an optimization algorithm that uses only finest level information \cite{lewis2005}. The last problem consists of the viscous Burgers' PDE, where the state at some specified end time is to be controlled by the starting condition. Burgers' equation is often investigated as a first test problem for algorithms in fluid dynamics, and is relevant in assessing the performance for more realistic problems. Similar problems were investigated in this context in \cite{kouri2013trust, Kouri2014}. In all three cases, the uncertainty can be found in the diffusion coefficients, whose nature is discussed shortly. 

We compare these with the method presented in \cite{vanbarel2019robust} that utilizes just a single optimization level on which MLMC obtains the gradient, as illustrated in Figure \ref{fig:class1}. One can summarize the algorithm as follows. It executes the NCG method until the gradient becomes smaller than the RMSE it is calculated with, i.e., until (\ref{eq:relRMSE}) no longer holds. New samples are then taken with a RMSE multiplied by $0.25$. 
The details are more or less as presented in \cite{vanbarel2019robust}, with some slight differences to make the comparison as fair as possible; e.g., the definition of the RMSE is pointwise in \cite{vanbarel2019robust}, but is done using an $L^2$-norm in this paper; see \S\ref{sec:MLMC/MSE}. Both the equivalent number of fine grid gradient evaluations and the wall-clock time are compared. Each experiment starts with a guess of zero for the control.

The comparison with the algorithm classes represented by Figures \ref{fig:class3} and \ref{fig:class4} is not performed, since, for the high-dimensional uncertainties considered here, using a MC method would result in a large number of samples on the finest level. The methods in those classes usually use a more sophisticated quadrature technique, different from MC. This makes them likely better suited for problems with a smaller stochastic dimension, in which case the `$4$' in Figures \ref{fig:class3} and \ref{fig:class4} denotes a reasonably small number of solves. For low-dimensional uncertainties, well implemented methods of those types have a good chance to outperform the MC based method described in this paper. 

All calculations and timings are performed in Julia 1.3.1 and executed on an Intel Core\texttrademark \hspace{0.1cm}i5-4690K CPU @ 3.50 GHz. The software that allows the reproduction of these results can be made available on request.

\subsection{Stochastic field}
\newcommand{\sfield}{z}
\newcommand{\nkl}{ {n_{\text{KL}} } }
\newcommand{\cov}[2]{\text{Cov}\left[#1,#2\right]}
The uncertainties manifest themselves as stochastic fields $k$, which are taken here to be lognormal, i.e., $k(x,\omega) \triangleq \exp(z(x,\omega))$, with $z$ a Gaussian field. As in \cite{cliffe2011, graham2011}, e.g., we assume $\mean{z(x,\omega)} = 0$ and an exponential covariance 
\begin{equation}
C(x,x') = \cov{\sfield(x,\omega)}{\sfield(x',\omega)} = \sigma^2 \exp\Big(-\frac{\norm{x-x'}}{\lambda}\Big)
\label{eq:covariance_Gaussian}
\end{equation} 
with $\sigma^2$ the variance of the field, $\lambda$ the correlation length, and $\norm{\und}$ the $2$-norm in $\mathbb{R}^d$. A typical realization of $k$ can be found in Figure \ref{fig:sample1}.

One way to generate samples of $z$ is to use the Karhunen-Lo\`eve (KL) expansion \cite{loeve1946fonctions, karhunen1947ueber}
\begin{equation}
\sfield(x,\omega) = \mean{\sfield(x,\omega)} + \sum\limits_{n=1}^{\infty} \sqrt{\theta_n} \xi_n(\omega) f_n(x). \label{eq:KLexpansion}
\end{equation}
The KL expansion is the unique expansion of the above form that minimizes the total MSE if the expansion is truncated to a finite number of terms 
\cite{ghanem2003}. This sampling method is used in, e.g., \cite{borzi2009VWmultigrid, borzi2011pod, chen2014weighted, cliffe2011, graham2011}. The KL expansion represents the field at all points in the domain $D$, but one must be aware of the truncation error. 

\newcommand{\normals}{\bs{\xi}}
\newcommand{\samples}{\bs{z}}
Alternatively, one can generate exact realizations of the field in a finite set of $m$ discretization points $x_1, \ldots, x_m$ by considering the resulting covariance matrix $\Sigma = (C(x_i,x_j))_{i,j=1}^m$ and a factorization of the form $\Sigma = LL^T$, such as the Cholesky factorization. Clearly, samples $\samples = L\normals$ with $\normals$ a vector of independent standard normal random variables, then have the desired covariance: 
$$\mean{\samples\samples^T} = \mean{L\normals\normals^T L^T} = L\mean{\normals\normals^T}L^T = LL^T = \Sigma.$$ One may have observed that the covariance (\ref{eq:covariance_Gaussian}) of the stochastic field is homogeneous, meaning that it is a function of $x-x'$ only. In this case, the circulant embedding method \cite{dietrich1997,	graham2011} can be used to very efficiently sample the stochastic field in a regular rectangular grid of points in $\mathbb{R}^2$. $\Sigma$ is then block-Toeplitz with Toeplitz blocks and can be embedded\footnote{Some padding is sometimes necessary to ensure positive definiteness; for details, see, e.g. \cite{dietrich1997,graham2011}.} in a block-circulant matrix with circulant blocks (hence the name of the method), which can be factorized using a multidimensional FFT. Generation of $L\normals$ can then be performed in $\mathcal{O}(m\log m)$ operations. For $d>2$, this generalizes as one would expect. This last method is used to sample the stochastic fields in this paper.
\begin{figure}[h]
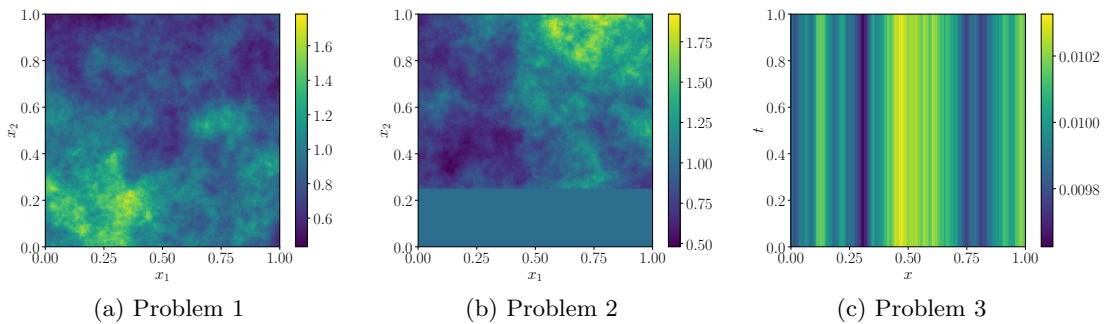

	\centering
	\begin{subfigure}{.32\textwidth}
		\centering
		\scalebox{0.33}{\hspace{-1.75cm}\input{Figure_1.pgf}}
		\caption{Problem 1 \hphantom{......}}
		\label{fig:sample1}
	\end{subfigure}
	\begin{subfigure}{.32\textwidth}
		\centering
		\scalebox{0.33}{\hspace{-1.20cm}\input{Figure_4.pgf}}
		\caption{Problem 2 \hphantom{..}}
		\label{fig:sample2}
	\end{subfigure}
	\begin{subfigure}{.32\textwidth}
		\centering
		\scalebox{0.33}{\hspace{-0.65cm}\input{Figure_6.pgf}}
		\caption{Problem 3}
		\label{fig:sample3}
	\end{subfigure}
	\caption{Samples of the stochastic fields used in each of the model problems, calculated using the circulant embedding method. Problem 1: A realization of the lognormal field specified by (\ref{eq:covariance_Gaussian}) with $\lambda = 0.3$ and $\sigma^2 = 0.1$. Problem 2: Another realization of the same random field, but for $x_2 \in [0,1/4]$, the field is set to $1$. Problem 3: A realization of the 1-dimensional lognormal field specified by (\ref{eq:covariance_Gaussian}) with $\lambda = 0.3$ and $\sigma^2 = 0.1$, multiplied by a scaling factor of $\num{1e-3}$.}
	\label{fig:samples}
\end{figure}

\subsection{Problem 1: Laplace equation}
Consider the Laplace equation on $D = (0,1)^2 \subset \mathbb{R}^2$ with boundary $\boundary{D}$. 
The state $y$ can be interpreted as a temperature distribution on $D$. A heat source (or sink) on $D$ constitutes the control $u$.
Problem 1 then consists of the robust optimization problem (\ref{eq:robust_problem}),
\begin{equation*}
	\min_{u \in U, y \in Y} J(y,u) = \frac{1}{2} \mean{\norm{y-z}^2} + \frac{\alpha}{2}\norm{u}^2 \quad \text{s.t.}\quad c(y,u)=0,
\end{equation*}
where the constraint $c$ is given by
\begin{align}
\begin{aligned}
-\nabla\cdot(k(x,\omega)\nabla y(x,\omega)) &= u(x) && \mbox{on } D, \\
y(x,\omega) &= 0 && \mbox{on } \boundary{D}.
\end{aligned}
\label{model_SPDE}
\end{align}
Here $u \in U = L^2(D)$ and $y \in Y = H_0^1(D) \otimes L^2(\Omega)$. 
The cost is quadratic and the constraint $c$ is linear, and therefore the problem is quadratic and convex, guaranteeing a unique solution.
We take $\alpha = \num{1e-6}$ and define the target function as
$$
z(x) = \begin{cases} 
1 & \text{if } x \in [1/4,3/4] \times [1/4,3/4], \\
0 & \text{otherwise}.
\end{cases}
$$
The uncertainty exists in the heat conduction coefficient, which is modeled as the lognormal stochastic field $k: D\times \Omega \rightarrow \mathbb{R}:(x,\omega) \mapsto k(x,\omega)$, defined, as explained previously, by (\ref{eq:covariance_Gaussian}) with $\lambda=0.3$ and $\sigma^2=0.1$. A sample of $k$ is shown in Figure \ref{fig:sample1}. 

\subsubsection{Optimality conditions}
Substituting the terms related to the constraint in (\ref{eq:optcond_lagr_general}) or (\ref{eq:optcond_lagr}) leads to the optimality conditions
\begin{equation}
\left\{\begin{array}{rcll}
-\nabla \cdot(k\nabla y)& = & u &\quad \mbox{on } D,\\
-\nabla \cdot(k\nabla p)& = &y - z  &\quad \mbox{on } D,\\
\nabla \rJ(u) & = & \alpha u + \mean{p} = 0 &\quad \mbox{on } D
\end{array}\right.
\end{equation}
with $y = p = 0$ on $\boundary{D}$.

\subsubsection{Discretization and optimization details}
The PDE is discretized using a finite difference method of second order, which, on these regular discretization grids, can also be interpreted as a finite element method. For all three model problems, the discretization of the optimality conditions is such that it corresponds to the optimality conditions of the discretized problem. 
For the implementation of the level mapping operator $\chlvl{\ell_1}{\ell_2}$, we use linear interpolation and full weighted coarsening, i.e., interpolation and coarsening defined by the stencils
$$
\frac{1}{16}\begin{bmatrix}
1 & 2 & 1\\
2 & 4 & 2\\
1 & 2 & 1
\end{bmatrix} \quad \text{ and } \quad 
\frac{1}{4}\left]
\begin{matrix}
1 & 2 & 1\\
2 & 4 & 2\\
1 & 2 & 1
\end{matrix}
\right[,
$$
respectively. The inner products and norms are of course such that (\ref{eq:chlvl_adjoint}) holds.
The problem is quadratic; the smoother is the NCG smoother as described in \S\ref{sec:opt/smoother}\footnote{However, the gradient at each iteration point is calculated explicitly, i.e., not as a linear combination of previous gradients. This makes the counting more uniform across the three model problems, since Problem 3 has a nonlinear constraint, which does not allow for this shortcut to be used. Each smoothing step therefore contributes two gradient evaluations to the total cost.}. We take $K=L=4$ with a coarsest level of $17 \times 17$ and a finest level of $257 \times 257$. 

\subsubsection{Results}
Just 2 MG/OPT V-cycles were required to reach a gradient norm of $\num{5e-5}$. For the $i$-th V-cycle, Table \ref{tab:mgopt1} below shows the requested RMSE $\epsilon^{(i)}$, the number of samples $\vec{n}$ used for the finest optimization problem,
the cost and gradient norm before ($\smash{J^{(i)}_0}$ and $\smash{\norm{\gstart}}$) and after ($\smash{J^{(i)}}$ and $\smash{\norm{\gend}}$), calculated using that V-cycle's sample sets, the theoretical equivalent number of fine grid PDE solves and the measured wallclock time in seconds. Note that the starting cost and gradient in V-cycle $i+1$ is not exactly equal to the final gradient of V-cycle $i$ since the sample sets used in their calculations are different.
A calculation using new samples yields $J = \num{1.36e-02}$ and $\|\nabla J(u)\| = \num{4.47e-05}$. The total number of finest grid PDE solves is $637$, the total time is $590$ s.
\begin{table}[h]
\begin{equation*}
\arraycolsep=3.0pt
\small
\begin{array}{l|lllllllllllll}
i & \epsilon^{(i)} & n_0 & n_1 & n_2 & n_3 & n_4 & J^{(i)}_0 & J^{(i)} &\norm{\gstart} & \norm{\gend} & \text{Solves} & \text{Time [s]}  \\
\hline
1 & \num{1.00e-01} & 2106 & 440 & 92 & 20 & 4 & \num{1.27e-01} & \num{1.43e-02} & \num{2.09e-02} & \num{9.31e-05} & 132 & 211 & \\
2 & \num{2.50e-05} & 18609 & 998 & 209 & 41 & 6 & \num{1.41e-02} & \num{1.40e-02} & \num{1.48e-04} & \num{1.56e-05} & 505 & 379 & \\
\end{array}
\end{equation*}
\vspace{-0.4cm}
\caption{MG/OPT results for Problem 1.}
\label{tab:mgopt1}
\end{table}

We compare this to the results obtained using finest level optimization only, which required $26$ NCG steps. Table \ref{tab:ncg1} shows information about the NCG steps in which new sample sets for the underlying MLMC method were generated. The values for $J^{(i)}$ and $\norm{\gend}$ are those after iteration $i$ has completed. 
After $26$ iterations, a new sample test yields $J = \num{1.37e-02}$ and $\|\nabla J(u)\| = \num{4.82e-05}$.
The total number of finest grid PDE solves is $2854$, the total time is $1204$ s. 
\begin{table}[h]
\begin{equation*}
\arraycolsep=3.0pt 
\small
\begin{array}{l|llllllllll}
	i & \epsilon^{(i)} & n_0 & n_1 & n_2 & n_3 & n_4 & J^{(i)} & \norm{\gend} & \text{Solves} & \text{Time [s]} \\
	\hline
	0 & \num{1.00e-02} & 2106 & 440 & 92 & 20 & 4 & \num{1.27e-01} & \num{2.10e-02} & 47(\times 14) & 20 (\times 14) \\
	15 & \num{2.50e-05} & 20376 & 1160 & 237 & 47 & 13 & \num{1.42e-02} & \num{2.40e-04} & 183(\times 12) & 77 (\times 12) \\
\end{array}
\end{equation*}
\vspace{-0.4cm}
\caption{Results for Problem 1 using finest level optimization only.}
\label{tab:ncg1}
\end{table}

Plots of the results can be found in Figure \ref{fig:results1}. 
\begin{figure}[h]
	\hspace{-0.01\textwidth}
	\begin{subfigure}[t]{.35\textwidth}
		\centering
		\sethw{0.88}{0.88}
		\input{fig/prob1/p2mgopt/u.tex} \\
		\input{fig/prob1/p2mgopt/grad.tex} \\
	\end{subfigure}
	\hspace{-0.04\textwidth}
	\begin{subfigure}[t]{.35\textwidth}
		\centering
		\sethw{0.88}{0.88}
		\input{fig/prob1/p2mlmc/u.tex} \\
		\input{fig/prob1/p2mlmc/grad.tex} \\
	\end{subfigure}
	\hspace{-0.04\textwidth}
	\begin{subfigure}[t]{.35\textwidth}
		\centering
		\sethw{0.88}{0.88}
		\input{fig/prob1/p2mgopt/Ey.tex} \\
		\input{fig/prob1/p2mgopt/Vy.tex} \\
	\end{subfigure}
	\caption{Optimization results for Problem 1. The leftmost figures show the solution $u$ and its gradient $\nabla J(u)$ obtained using MG/OPT. The figures in the middle are obtained using finest level NCG only. The rightmost figures show $\mean{y}$ and $\var{y}$ at the optimal point. They look identical for both optimization methods. All figures are shown on the finest grid of $257 \times 257$.}
	\label{fig:results1}
\end{figure}
First, we can observe that the control input $u$ generated using only finest level optimization looks smoother. Yet, it does not seem to yield a lower cost function. Furthermore, the residual gradient contains mostly low frequency components, while the MG/OPT gradient contains high frequencies also. This is what is expected in a multigrid-like method such as MG/OPT \cite{briggs2000multigrid}. The smoother, as its name suggests, removes mostly high frequency components from the gradient, while it has more trouble removing low frequency components. The gradients in Figure \ref{fig:results1} suggest that the MG/OPT method is more successful at removing components across a wide frequency spectrum. These two observations can also be made for the other two test problems.

\subsection{Problem 2: Dirichlet to Neumann map}
The constraint consists again of the diffusion PDE, but the control now acts on a part $\Gamma = (0,1) \times \{0\}$ of the boundary $\boundary{D}$.
\begin{align} \label{eq:prob2_pde}
\begin{aligned}
-\nabla\cdot(k(x,\omega)\nabla y(x,\omega)) & = 0 && \mbox{on } D, \\
y(x,\omega) & = u(x) && \mbox{on } \Gamma, \\
y(x,\omega) & = 0 && \mbox{on } \boundary{D}\setminus \Gamma.
\end{aligned}
\end{align}
The goal is to bring the flux at $\Gamma$ as close as  possible to some target flux $\phi_\Gamma \in L^2(\Gamma)$ on $\Gamma$. To that end, we optimize
\begin{equation}
J(y,u) = \frac{1}{2}\mean{\int_\Gamma (k\diffp{y}{\vec{n}}-\phi_\Gamma)^2 \d{x}} + \frac{\alpha}{2}\int_\Gamma u^2 \d{x}.
\label{eq:p4.cont.costfun}
\end{equation}
This problem is also quadratic and convex. We take $\alpha = \num{1e-6}$ and define the target as
$\phi_\Gamma(x) = \sin(\pi x).$
The uncertainty exists again in $k$ and is of the same nature as in Problem 1, except in the region $[0,1] \times [0,1/4]$ that touches the boundary $\Gamma$. A sample of $k$ is shown in Figure \ref{fig:sample2}. Having a deterministic zone close to the control ensures that the assumption on the variance in Theorem \ref{theorem:MLMC} holds. The smaller the deterministic zone is, the longer it takes before the asymptotic decay of the variance with level begins. In practice, one must then take a finer coarsest level.

We consider controls $u \in U = L^2(\Gamma)$, because it is a convenient setting for numerical optimization. 
However, only the controls in $H^{1/2}(\Gamma) \subset L^2(\Gamma)$ produce state realizations in $H^1(D)$. Therefore, we take the larger state space $Y=L^2(D) \otimes L^2(\Omega)$. The solution of (\ref{eq:prob2_pde}) for a given boundary in $L^2(\Gamma) \setminus H^{1/2}(\Gamma)$ could then be understood as the solution to the so-called very weak form of the PDE (\ref{eq:prob2_pde}) \cite{may2013error}. Some alternative formulations and corresponding function space settings for this type of problem are discussed in \cite{kunisch2007constrained}.

\subsubsection{Optimality conditions}
Evaluating (\ref{eq:optcond_lagr_general}) for this specific problem leads to the optimality conditions
\begin{equation}
\left\{\begin{array}{rcll}
-\nabla \cdot(k\nabla y) &=& 0 &\quad \mbox{on } D,\\
-\nabla \cdot(k\nabla p) &=& 0  &\quad \mbox{on } D,\\
\nabla \rJ(u) = \alpha u + k\diffp{p}{\vec{n}} &=& 0 &\quad \mbox{on } \Gamma
\end{array}\right.
\end{equation}
with $p= 0$ on $\boundary{D}\setminus \Gamma$ and $p= k\diffp{y}{\vec{n}} - \phi_\Gamma$ on $\Gamma$ and $y=0$ on $\boundary{D}$.

\subsubsection{Discretization and optimization details}
The discretization method is the same as in Problem 1. As in all three problems, the discretization of the optimality conditions is such that it corresponds to the optimality conditions of the discretized problem. 
Even though we adopt the setting $U=L^2(\Gamma)$ and $Y=L^2(D) \otimes L^2(\Omega)$, the intermediate controls and states that arise during the optimization process and those constituting the final solution have additional regularity. One reason is the first term in (\ref{eq:p4.cont.costfun}), which drives the control $u$ in a direction that constrains some of the partial derivatives of the state. While the intermediate controls and states are not shown, one can observe that the final solution is indeed smooth in Figure \ref{fig:results2}. The discretization should therefore not run into problems here. 

We still use linear interpolation and full weighted coarsening. Since the space $L^2(\Gamma)$ in which $u$ resides now contains functions defined on the 1-dimensional $\Gamma$ instead of on $D$, the corresponding stencils are
$$
\frac{1}{4}\begin{bmatrix}
1 & 2 & 1
\end{bmatrix} \quad \text{ and } \quad 
\frac{1}{2}\left]
\begin{matrix}
1 & 2 & 1\\
\end{matrix}
\right[.
$$
Again it is important that the inner products and norms satisfy (\ref{eq:chlvl_adjoint}). The problem is quadratic and the smoother is the same as in Problem 1. We take $K=L=5$ with a coarsest level of $9 \times 9$ and a finest level of $257 \times 257$. 

\subsubsection{Results}
The solution is shown in Figure \ref{fig:results2}.
Algorithm \ref{alg:MG/OPT} requires $6$ V-cycles to obtain a target gradient norm of $\num{1e-3}$, see Table \ref{tab:mgopt2}.
The repeating numbers of samples correspond to the numbers of warmup samples used to estimate the variance. A calculation using new samples yields $J = \num{2.77e-04}$ and $\|\nabla J(u)\| = \num{8.35e-04}$. The total number of finest grid PDE solves is $1626$, the total time is $1661$ s.
\begin{table}[h]
\begin{equation*}
\arraycolsep=3.0pt
\small
\begin{array}{l|llllllllllllll}
i & \epsilon^{(i)} & n_0 & n_1 & n_2 & n_3 & n_4 & n_5 & J^{(i)}_0 & J^{(i)} &\norm{\gstart} & \norm{\gend} & \text{Solves} & \text{Time [s]}  \\
\hline
1 & \num{1.00e-01} & 250 & 250 & 250 & 92 & 20 & 4 & \num{2.48e-01} & \num{2.36e-03} & \num{2.21e+00} & \num{2.30e+00} & 102 & 154\\
2 & \num{5.76e-01} & 250 & 250 & 250 & 92 & 20 & 4 & \num{2.15e-03} & \num{2.77e-05} & \num{2.30e+00} & \num{3.89e-01} & 102 & 183\\
3 & \num{3.29e-02} & 250 & 250 & 250 & 92 & 20 & 4 & \num{1.01e-03} & \num{8.69e-04} & \num{3.90e-01} & \num{6.69e-02} & 102 & 148\\
4 & \num{5.74e-03} & 705 & 250 & 250 & 92 & 20 & 4 & \num{5.64e-04} & \num{5.47e-04} & \num{6.90e-02} & \num{1.64e-02} & 103 & 153\\
5 & \num{1.96e-03} & 4751 & 930 & 250 & 92 & 20 & 4 & \num{1.66e-04} & \num{1.61e-04} & \num{1.86e-02} & \num{4.13e-03} & 121 & 170\\
6 & \num{5.00e-04} & 67656\hspace{-1cm} & 12604 & 3185 & 699 & 173 & 22 & \num{2.08e-04} & \num{2.07e-04} & \num{4.22e-03} & \num{7.59e-04} & 1096 & 853\\
\end{array}
\end{equation*}
\vspace{-0.4cm}
\caption{MG/OPT results for problem 2.}
\label{tab:mgopt2}
\end{table}

Using only the finest optimization level, the target gradient norm of $1e-3$ is not reached after $200$ iterations. A new sample test yields $J = \num{2.79e-04}$ and $\|\nabla J(u)\| = \num{2.16e-03}$. The total equivalent number of fine grid solves came to $22422$, the total time to $10764$ s. As before, the NCG steps in which new sample sets were generated are detailed in Table \ref{tab:ncg2} below.
\begin{table}[h]
\begin{equation*}
\arraycolsep=3.0pt
\small
\begin{array}{l|lllllllllll}
i & \epsilon^{(i)} & n_0 & n_1 & n_2 & n_3 & n_4 & n_5 & J^{(i)} & \norm{\gend} & \text{Solves} & \text{Time [s]} \\
\hline
0 & \num{1.00e-02} & 250 & 250 & 250 & 92 & 20 & 4 & \num{2.48e-01} & \num{2.20e+00} & 36(\times 114) & 22(\times 114) \\
115 & \num{7.22e-04} & 33601 & 6869 & 1532 & 415 & 68 & 18 & \num{2.03e-04} & \num{6.59e-03} & 213(\times 86) & 96(\times 86) \\
\end{array}
\end{equation*}
\vspace{-0.4cm}
\caption{Results for Problem 2 using finest level optimization only.}
\label{tab:ncg2}
\end{table}
\begin{figure}[h]
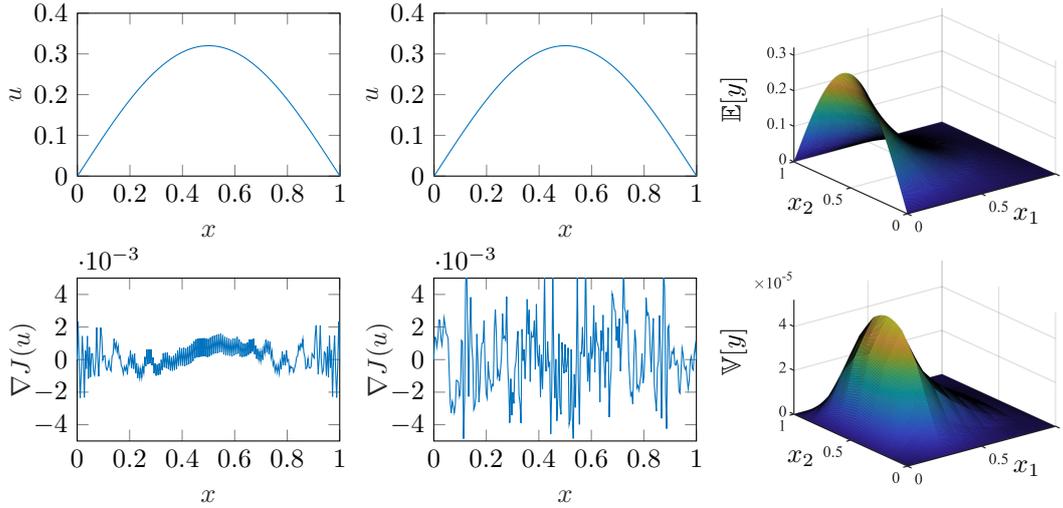
 
	\hspace{-0.01\textwidth}
	\begin{subfigure}[t]{.30\textwidth}
		\centering
		\sethw{0.5}{0.8}
		\definecolor{mycolor1}{rgb}{0.00000,0.44700,0.74100}%
\definecolor{mycolor2}{rgb}{0.85000,0.32500,0.09800}%
\definecolor{mycolor3}{rgb}{0.92900,0.69400,0.12500}%
\definecolor{mycolor4}{rgb}{0.49400,0.18400,0.55600}%
\definecolor{mycolor5}{rgb}{0.46600,0.67400,0.18800}%
\begin{tikzpicture}

\begin{axis}[%
width=\figurewidth,
height=\figureheight,
at={(0\figurewidth,0\figureheight)},
scale only axis,
xmin=0,
xmax=1,
xlabel style={font=\color{white!15!black}},
xlabel={$x$},
ylabel style={font=\color{white!15!black}, at={(0.12,0.5)}},
ylabel={$u$},
ymin=0,
ymax=0.4,
yminorticks=true,
]
\addplot [color=mycolor1]
table[row sep=crcr]{%
0.0  			0.0\\
0.00390625		0.003764904039830596\\
0.0078125  		0.007602131337257026\\
0.01171875  	0.011452943729685765\\
0.015625  		0.015312316259087992\\
0.01953125  	0.01917709343691863\\
0.0234375  		0.023045035816112436\\
0.02734375  	0.026914593520675892\\
0.03125  0.030784610498813897 \\
0.03515625  0.03465371747100151 \\
0.0390625  0.03852103796997834 \\
0.04296875  0.04238546065807885 \\
0.046875  0.04624632888959038 \\
0.05078125  0.05010263283701204 \\
0.0546875  0.05395374595326201 \\
0.05859375  0.05779879712794444 \\
0.0625  0.061637056877137525 \\
0.06640625  0.06546786755647181 \\
0.0703125  0.06929054026131122 \\
0.07421875  0.07310437427388398 \\
0.078125  0.07690855821891701 \\
0.08203125  0.08070255216923579 \\
0.0859375  0.08448576653254125 \\
0.08984375  0.08825746828581993 \\
0.09375  0.09201695700214732 \\
0.09765625  0.09576363376897203 \\
0.1015625  0.09949695175558969 \\
0.10546875  0.10321625240599987 \\
0.109375  0.10692091459909879 \\
0.11328125  0.11061036698466988 \\
0.1171875  0.11428400986642491 \\
0.12109375  0.11794123778158964 \\
0.125  0.12158143406324451 \\
0.12890625  0.12520403804041308 \\
0.1328125  0.12880848760499033 \\
0.13671875  0.13239415270162644 \\
0.140625  0.13596048567255262 \\
0.14453125  0.1395068535768283 \\
0.1484375  0.14303274937495292 \\
0.15234375  0.1465375734886788 \\
0.15625  0.15002081298649445 \\
0.16015625  0.15348187106979058 \\
0.1640625  0.15692025150960448 \\
0.16796875  0.16033538037865627 \\
0.171875  0.16372675205348627 \\
0.17578125  0.1670938221548136 \\
0.1796875  0.17043608778343453 \\
0.18359375  0.17375298690355295 \\
0.1875  0.17704402371216849 \\
0.19140625  0.18030865396220638 \\
0.1953125  0.18354641752736722 \\
0.19921875  0.18675673964608727 \\
0.203125  0.1899391648332301 \\
0.20703125  0.1930931250245519 \\
0.2109375  0.1962182183850383 \\
0.21484375  0.19931387627010125 \\
0.21875  0.20237969402702152 \\
0.22265625  0.20541510291752638 \\
0.2265625  0.20841972683001347 \\
0.23046875  0.21139302606601132 \\
0.234375  0.21433461022335432 \\
0.23828125  0.21724396039317215 \\
0.2421875  0.22012067886612882 \\
0.24609375  0.22296426707024042 \\
0.25  0.22577433499888688 \\
0.25390625  0.22855039204087294 \\
0.2578125  0.23129205777466225 \\
0.26171875  0.2339988357447424 \\
0.265625  0.23667037240931021 \\
0.26953125  0.23930617040758786 \\
0.2734375  0.24190590564493594 \\
0.27734375  0.24446909288697535 \\
0.28125  0.24699541373100822 \\
0.28515625  0.249484395469315 \\
0.2890625  0.2519357306774271 \\
0.29296875  0.2543489790135629 \\
0.296875  0.2567238398122114 \\
0.30078125  0.25905988169553645 \\
0.3046875  0.2613568066304329 \\
0.30859375  0.26361419773712336 \\
0.3125  0.2658317643949075 \\
0.31640625  0.2680091011909468 \\
0.3203125  0.2701459374168745 \\
0.32421875  0.2722418703473759 \\
0.328125  0.2742966443945755 \\
0.33203125  0.2763098584959199 \\
0.3359375  0.27828128319980416 \\
0.33984375  0.2802105337543328 \\
0.34375  0.282097394978173 \\
0.34765625  0.28394148935334795 \\
0.3515625  0.2857426186663921 \\
0.35546875  0.2875004257959868 \\
0.359375  0.28921471875965304 \\
0.36328125  0.2908851569270243 \\
0.3671875  0.2925115579762258 \\
0.37109375  0.294093592089341 \\
0.375  0.2956310947965895 \\
0.37890625  0.2971237432952542 \\
0.3828125  0.2985713946981014 \\
0.38671875  0.2999737335838711 \\
0.390625  0.30133063164124263 \\
0.39453125  0.3026417853399617 \\
0.3984375  0.30390708951636713 \\
0.40234375  0.3051262540387611 \\
0.40625  0.3062991871456396 \\
0.41015625  0.30742560896296217 \\
0.4140625  0.3085054458714476 \\
0.41796875  0.30953843512418006 \\
0.421875  0.3105245109436007 \\
0.42578125  0.31146342477070244 \\
0.4296875  0.3123551281080265 \\
0.43359375  0.3131993846258082 \\
0.4375  0.31399616311452555 \\
0.44140625  0.31474523978699426 \\
0.4453125  0.31544659832857314 \\
0.44921875  0.3161000277240022 \\
0.453125  0.31670552575978816 \\
0.45703125  0.3172628980835015 \\
0.4609375  0.31777216048734286 \\
0.46484375  0.3182331321425427 \\
0.46875  0.31864584529262935 \\
0.47265625  0.31901013405146333 \\
0.4765625  0.3193260434827567 \\
0.48046875  0.31959342353078257 \\
0.484375  0.3198123329792223 \\
0.48828125  0.31998263743252686 \\
0.4921875  0.320104411536821 \\
0.49609375  0.32017753648678005 \\
0.5  0.3202021023040973 \\
0.50390625  0.3201780048163441 \\
0.5078125  0.3201053476280319 \\
0.51171875  0.31998404016180243 \\
0.515625  0.31981420075762623 \\
0.51953125  0.31959575422541087 \\
0.5234375  0.3193288343506213 \\
0.52734375  0.3190133822370338 \\
0.53125  0.3186495470925152 \\
0.53515625  0.318237284026345 \\
0.5390625  0.31777675803814837 \\
0.54296875  0.3172679370433933 \\
0.546875  0.3167110006983984 \\
0.55078125  0.31610593279868054 \\
0.5546875  0.31545292740823944 \\
0.55859375  0.31475198668137083 \\
0.5625  0.31400332096742 \\
0.56640625  0.31320694653002956 \\
0.5703125  0.3123630862393135 \\
0.57421875  0.3114717714642085 \\
0.578125  0.31053323677472255 \\
0.58203125  0.30954753101497823 \\
0.5859375  0.308514900928668 \\
0.58984375  0.30743541455958645 \\
0.59375  0.3063093321813199 \\
0.59765625  0.3051367298567211 \\
0.6015625  0.30391788393257035 \\
0.60546875  0.3026528888128785 \\
0.609375  0.30134203035615065 \\
0.61328125  0.2999854161688545 \\
0.6171875  0.29858334678490067 \\
0.62109375  0.29713595349362726 \\
0.625  0.2956435490751142 \\
0.62890625  0.29410627921235183 \\
0.6328125  0.2925244629324173 \\
0.63671875  0.29089826742459907 \\
0.640625  0.28922801767382983 \\
0.64453125  0.2875138997095693 \\
0.6484375  0.2857562492856378 \\
0.65234375  0.28395526460605813 \\
0.65625  0.2821112974492511 \\
0.66015625  0.2802245519898268 \\
0.6640625  0.27829539912800016 \\
0.66796875  0.276324060100591 \\
0.671875  0.27431091139455926 \\
0.67578125  0.27225618889430436 \\
0.6796875  0.2701602875930251 \\
0.68359375  0.2680234703159744 \\
0.6875  0.2658461331480307 \\
0.69140625  0.26362855273106434 \\
0.6953125  0.2613711285299945 \\
0.69921875  0.2590741591258647 \\
0.703125  0.2567380531132231 \\
0.70703125  0.25436311747883544 \\
0.7109375  0.2519497752527735 \\
0.71484375  0.24949833858713177 \\
0.71875  0.2470092373757279 \\
0.72265625  0.2444827904722273 \\
0.7265625  0.24191946035981657 \\
0.73046875  0.23931957688638067 \\
0.734375  0.23668361353906275 \\
0.73828125  0.23401190597297483 \\
0.7421875  0.2313049418189438 \\
0.74609375  0.22856308730665081 \\
0.75  0.2257868270448555 \\
0.75390625  0.22297655421224688 \\
0.7578125  0.2201327480244948 \\
0.76171875  0.21725581245956715 \\
0.765625  0.2143462325931136 \\
0.76953125  0.21140442010524685 \\
0.7734375  0.20843088130009035 \\
0.77734375  0.2054260225243875 \\
0.78125  0.20239036749250652 \\
0.78515625  0.1993243081904989 \\
0.7890625  0.19622839822525853 \\
0.79296875  0.19310305983872725 \\
0.796875  0.18994884545159707 \\
0.80078125  0.1867661728805123 \\
0.8046875  0.18355559510003555 \\
0.80859375  0.1803175844109391 \\
0.8125  0.17705269873906188 \\
0.81640625  0.17376141461811717 \\
0.8203125  0.1704442608793918 \\
0.82421875  0.1671017507779213 \\
0.828125  0.16373443011383834 \\
0.83203125  0.1603428174272014 \\
0.8359375  0.15692744219999244 \\
0.83984375  0.1534888267026186 \\
0.84375  0.15002752853177573 \\
0.84765625  0.14654405910418053 \\
0.8515625  0.14303900061553548 \\
0.85546875  0.13951288279962185 \\
0.859375  0.13596628997946425 \\
0.86328125  0.13239974368168836 \\
0.8671875  0.12881386280718052 \\
0.87109375  0.12520921009727973 \\
0.875  0.12158640084411874 \\
0.87890625  0.11794601083923029 \\
0.8828125  0.1142885871966603 \\
0.88671875  0.11061476046931855 \\
0.890625  0.10692512302627696 \\
0.89453125  0.10322028579566328 \\
0.8984375  0.09950080820816731 \\
0.90234375  0.09576732262059691 \\
0.90625  0.09202047630610487 \\
0.91015625  0.08826082524956137 \\
0.9140625  0.0844889590385251 \\
0.91796875  0.08070558736981262 \\
0.921875  0.07691143445210186 \\
0.92578125  0.07310709759163998 \\
0.9296875  0.06929310865975091 \\
0.93359375  0.06547028668988648 \\
0.9375  0.06163932487496065 \\
0.94140625  0.05780091835283409 \\
0.9453125  0.05395571852694825 \\
0.94921875  0.05010446095369779 \\
0.953125  0.0462480113740989 \\
0.95703125  0.042387000884585344 \\
0.9609375  0.0385224346418734 \\
0.96484375  0.034654973364049504 \\
0.96875  0.030785724578825648 \\
0.97265625  0.02691556763563843 \\
0.9765625  0.02304586902390739 \\
0.98046875  0.01917778735139835 \\
0.984375  0.015312870483958227 \\
0.98828125  0.011453359404208096 \\
0.9921875  0.007602408013608373 \\
0.99609375  0.003765042407123894 \\
1.0  0.0 \\
};
\end{axis}
\end{tikzpicture}
		\input{fig/prob2/p1mgopt/grad.tex} \\
	\end{subfigure}
	\hspace{0.01\textwidth}
	\begin{subfigure}[t]{.30\textwidth}
		\centering
		\sethw{0.5}{0.8}
%
%
\definecolor{mycolor1}{rgb}{0.00000,0.44700,0.74100}%
\definecolor{mycolor2}{rgb}{0.85000,0.32500,0.09800}%
\definecolor{mycolor3}{rgb}{0.92900,0.69400,0.12500}%
\definecolor{mycolor4}{rgb}{0.49400,0.18400,0.55600}%
\definecolor{mycolor5}{rgb}{0.46600,0.67400,0.18800}%
\begin{tikzpicture}

\begin{axis}[%
width=\figurewidth,
height=\figureheight,
at={(0\figurewidth,0\figureheight)},
scale only axis,
xmin=0,
xmax=1,
xlabel style={font=\color{white!15!black}},
xlabel={$x$},
ylabel style={font=\color{white!15!black}, at={(0.12,0.5)}},
ylabel={$u$},
ymin=0,
ymax=0.4,
yminorticks=true,
axis background/.style={fill=white}
]
\addplot [color=mycolor1]
table[row sep=crcr]{%
0.0  0.0 \\
0.00390625  0.003765616745951403 \\
0.0078125  0.007603644908848854 \\
0.01171875  0.011455196265257827 \\
0.015625  0.015315200018024793 \\
0.01953125  0.019180567779170928 \\
0.0234375  0.02304912657016201 \\
0.02734375  0.026919258748114735 \\
0.03125  0.03078969582079495 \\
0.03515625  0.03465931092667902 \\
0.0390625  0.0385270416395092 \\
0.04296875  0.04239190998446385 \\
0.046875  0.04625307309575545 \\
0.05078125  0.05010972332562278 \\
0.0546875  0.05396106423656615 \\
0.05859375  0.05780631067860365 \\
0.0625  0.06164480146202801 \\
0.06640625  0.06547587929431962 \\
0.0703125  0.06929886108303275 \\
0.07421875  0.07311300140798523 \\
0.078125  0.0769176478407619 \\
0.08203125  0.08071216281676584 \\
0.0859375  0.08449588724830252 \\
0.08984375  0.08826809564432163 \\
0.09375  0.09202817716749452 \\
0.09765625  0.09577549547850008 \\
0.1015625  0.09950948032582214 \\
0.10546875  0.10322953033802905 \\
0.109375  0.10693486168888577 \\
0.11328125  0.11062482547422266 \\
0.1171875  0.11429920446323483 \\
0.12109375  0.11795728647786635 \\
0.125  0.12159814174893838 \\
0.12890625  0.1252211521779617 \\
0.1328125  0.12882592716802319 \\
0.13671875  0.13241212238252573 \\
0.140625  0.13597890105718127 \\
0.14453125  0.13952558412243293 \\
0.1484375  0.14305179817439687 \\
0.15234375  0.14655692655522623 \\
0.15625  0.15004057732770565 \\
0.16015625  0.1535021434168831 \\
0.1640625  0.1569411125078464 \\
0.16796875  0.16035669832651306 \\
0.171875  0.16374848582290286 \\
0.17578125  0.16711607370136727 \\
0.1796875  0.17045885838810193 \\
0.18359375  0.1737762409946279 \\
0.1875  0.17706769646218112 \\
0.19140625  0.18033273928949284 \\
0.1953125  0.18357073225840825 \\
0.19921875  0.1867813274962186 \\
0.203125  0.18996406031378504 \\
0.20703125  0.19311845614895576 \\
0.2109375  0.19624387291566336 \\
0.21484375  0.19933991078823776 \\
0.21875  0.20240607659511226 \\
0.22265625  0.2054419244406776 \\
0.2265625  0.2084469551658521 \\
0.23046875  0.2114207672527297 \\
0.234375  0.21436272884267926 \\
0.23828125  0.21727233555630057 \\
0.2421875  0.22014921073877036 \\
0.24609375  0.22299302613570582 \\
0.25  0.225803268811824 \\
0.25390625  0.22857955424595738 \\
0.2578125  0.23132141819514437 \\
0.26171875  0.23402844178978693 \\
0.265625  0.23670004605631764 \\
0.26953125  0.2393359487860889 \\
0.2734375  0.24193570814872217 \\
0.27734375  0.2444989764009438 \\
0.28125  0.24702512883516628 \\
0.28515625  0.24951402892310162 \\
0.2890625  0.25196526366404914 \\
0.29296875  0.25437859411206176 \\
0.296875  0.2567534163498761 \\
0.30078125  0.2590895579643316 \\
0.3046875  0.2613865302439491 \\
0.30859375  0.263644077173568 \\
0.3125  0.26586172318947804 \\
0.31640625  0.26803916126233096 \\
0.3203125  0.27017600872009484 \\
0.32421875  0.2722718434998161 \\
0.328125  0.2743265680584652 \\
0.33203125  0.2763398128333771 \\
0.3359375  0.27831142107817336 \\
0.33984375  0.28024070633889686 \\
0.34375  0.2821275910384945 \\
0.34765625  0.2839716853048654 \\
0.3515625  0.28577285444781714 \\
0.35546875  0.28753072832594057 \\
0.359375  0.2892448662821899 \\
0.36328125  0.2909151994742974 \\
0.3671875  0.2925412775100632 \\
0.37109375  0.2941231755454685 \\
0.375  0.2956604009138908 \\
0.37890625  0.297152780035069 \\
0.3828125  0.29860009361538686 \\
0.38671875  0.3000020117420486 \\
0.390625  0.30135857246073783 \\
0.39453125  0.30266923461713297 \\
0.3984375  0.3039340214688542 \\
0.40234375  0.30515275236431705 \\
0.40625  0.30632509731560575 \\
0.41015625  0.30745112142184805 \\
0.4140625  0.30853041487840527 \\
0.41796875  0.30956302232270566 \\
0.421875  0.31054866821318383 \\
0.42578125  0.311486883746898 \\
0.4296875  0.3123779526328245 \\
0.43359375  0.31322166512009036 \\
0.4375  0.31401775862876674 \\
0.44140625  0.3147662260508352 \\
0.4453125  0.3154668440512782 \\
0.44921875  0.3161198074864792 \\
0.453125  0.31672489297195416 \\
0.45703125  0.3172815739288376 \\
0.4609375  0.3177901296570452 \\
0.46484375  0.3182505792618319 \\
0.46875  0.3186626069310846 \\
0.47265625  0.31902643144294474 \\
0.4765625  0.3193419093261995 \\
0.48046875  0.31960872338866686 \\
0.484375  0.31982714130229145 \\
0.48828125  0.3199971624134442 \\
0.4921875  0.32011847545837546 \\
0.49609375  0.3201912036450008 \\
0.5  0.3202153776816157 \\
0.50390625  0.32019085385615403 \\
0.5078125  0.320117770543642 \\
0.51171875  0.31999610084070607 \\
0.515625  0.3198257312086899 \\
0.51953125  0.31960697997432075 \\
0.5234375  0.31933985354339794 \\
0.52734375  0.3190240844948979 \\
0.53125  0.3186599976018614 \\
0.53515625  0.31824773661700106 \\
0.5390625  0.3177870809182506 \\
0.54296875  0.31727833605389244 \\
0.546875  0.31672147849908117 \\
0.55078125  0.3161162210669548 \\
0.5546875  0.31546308295804837 \\
0.55859375  0.3147622852349567 \\
0.5625  0.31401363681344757 \\
0.56640625  0.3132173578968455 \\
0.5703125  0.3123734521578422 \\
0.57421875  0.31148218430845737 \\
0.578125  0.3105437653380415 \\
0.58203125  0.30955790584328235 \\
0.5859375  0.3085250799143672 \\
0.58984375  0.3074455731701647 \\
0.59375  0.30631935997174264 \\
0.59765625  0.30514683397270115 \\
0.6015625  0.3039279207306178 \\
0.60546875  0.3026629522074802 \\
0.609375  0.301352125714227 \\
0.61328125  0.2999954204474689 \\
0.6171875  0.2985933806849413 \\
0.62109375  0.2971459596292984 \\
0.625  0.29565349183920275 \\
0.62890625  0.2941161985296944 \\
0.6328125  0.29253427405338267 \\
0.63671875  0.29090821586439614 \\
0.640625  0.2892379486080412 \\
0.64453125  0.28752391104720554 \\
0.6484375  0.28576617497125656 \\
0.65234375  0.2839651777473686 \\
0.65625  0.2821212845596398 \\
0.66015625  0.2802346151450423 \\
0.6640625  0.27830556434975523 \\
0.66796875  0.2763342104503104 \\
0.671875  0.2743212402358167 \\
0.67578125  0.2722667885695638 \\
0.6796875  0.2701712157739193 \\
0.68359375  0.26803460375740396 \\
0.6875  0.2658573679052527 \\
0.69140625  0.2636398767643271 \\
0.6953125  0.2613824389546208 \\
0.69921875  0.25908551945023256 \\
0.703125  0.2567493705676013 \\
0.70703125  0.25437447169380906 \\
0.7109375  0.25196101118834485 \\
0.71484375  0.2495096063102901 \\
0.71875  0.24702050918926857 \\
0.72265625  0.24449412795697334 \\
0.7265625  0.2419306070736663 \\
0.73046875  0.23933056805578082 \\
0.734375  0.2366943728034801 \\
0.73828125  0.23402247233214096 \\
0.7421875  0.23131516664428262 \\
0.74609375  0.2285730418704233 \\
0.75  0.22579651729679984 \\
0.75390625  0.22298606164802573 \\
0.7578125  0.22014207310566514 \\
0.76171875  0.21726507722248267 \\
0.765625  0.2143554056580279 \\
0.76953125  0.21141341954282444 \\
0.7734375  0.2084396062118812 \\
0.77734375  0.20543458110698562 \\
0.78125  0.20239874962349774 \\
0.78515625  0.1993326108046093 \\
0.7890625  0.19623662108881154 \\
0.79296875  0.1931112645136943 \\
0.796875  0.18995693584249548 \\
0.80078125  0.1867742642153806 \\
0.8046875  0.18356372608614052 \\
0.80859375  0.18032578306606717 \\
0.8125  0.1770607814606488 \\
0.81640625  0.1737693491538136 \\
0.8203125  0.17045197004542967 \\
0.82421875  0.16710915979541388 \\
0.828125  0.16374151865662087 \\
0.83203125  0.16034965448078653 \\
0.8359375  0.15693398670339645 \\
0.83984375  0.15349493596746602 \\
0.84375  0.15003329042042624 \\
0.84765625  0.14654956196426305 \\
0.8515625  0.14304437422789792 \\
0.85546875  0.13951813032712818 \\
0.859375  0.13597145472563316 \\
0.86328125  0.13240472346754947 \\
0.8671875  0.12881862333975977 \\
0.87109375  0.12521399225546678 \\
0.875  0.12159117032321992 \\
0.87890625  0.11795052973157004 \\
0.8828125  0.11429268058049717 \\
0.88671875  0.11061853877025761 \\
0.890625  0.106928814918391 \\
0.89453125  0.10322372604275416 \\
0.8984375  0.09950392429474726 \\
0.90234375  0.09577017814552057 \\
0.90625  0.09202307129936109 \\
0.91015625  0.08826317474061092 \\
0.9140625  0.08449114391456937 \\
0.91796875  0.08070758106503849 \\
0.921875  0.07691319014210514 \\
0.92578125  0.07310861426016668 \\
0.9296875  0.06929449372418371 \\
0.93359375  0.06547149484378503 \\
0.9375  0.06164038520198179 \\
0.94140625  0.05780186710894561 \\
0.9453125  0.05395660874248716 \\
0.94921875  0.05010527144509804 \\
0.953125  0.04624864875039417 \\
0.95703125  0.04238755987285308 \\
0.9609375  0.0385228322080395 \\
0.96484375  0.03465531052452545 \\
0.96875  0.030785966450669566 \\
0.97265625  0.026915851012445906 \\
0.9765625  0.023046090037569206 \\
0.98046875  0.01917795280579386 \\
0.984375  0.015313052921924671 \\
0.98828125  0.011453553751085898 \\
0.9921875  0.007602535300507743 \\
0.99609375  0.00376505736299848 \\
1.0  0.0 \\
};
\end{axis}
\end{tikzpicture}
		\input{fig/prob2/p1mlmc/grad.tex} \\
	\end{subfigure}
	\hspace{0.01\textwidth}
	\begin{subfigure}[t]{.35\textwidth}
		\vspace{-3.5cm}
		\centering
		\sethw{0.88}{0.88}
		\input{fig/prob2/p1mgopt/Ey.tex} \\
		\vspace{-0.25cm}
		\input{fig/prob2/p1mgopt/Vy.tex} \\
	\end{subfigure}
	\caption{Optimization results for Problem 2. The leftmost figures show the solution $u$ and its gradient $\nabla J(u)$ obtained using MG/OPT. The figures in the middle show the result obtained using finest level NCG only. The rightmost figures show the expected value and variance of the state $y$ at the optimal point. They look identical for both optimization methods. The figures are shown for the finest grid of $257 \times 257$.}
	\label{fig:results2}
\end{figure}

\subsection{Problem 3: Burgers' equation}
The constraint consists of the viscous Burgers' equation with a 1-dimensional spatial domain $D=(0,1)$. The initial value serves as the control variable.
\newcommand{\fd}{\mathcal{D}} 
\begin{align}
\begin{aligned}
\diffp{y}{t} &= \frac{s}{2}\diffp{y^2}{x} + \diffp{}{x} (k\diffp{y}{x})  && \mbox{on } D \times (0,T) \triangleq \mathcal{D},\\
y &= 0 && \mbox{on } \{0,1\} \times (0,T), \\
y(\cdot,0) &= u && \mbox{on } D.
\end{aligned}
\label{eq:p5pde}
\end{align}
We consider the setting $u\in L^2(D)$ and $y \in L^2(\mathcal{D}) \otimes L^2(\Omega)$.
The cost functional is
\begin{equation}
J(y,u) = \frac{1}{2}\mean{\int_D (y(x,T)-z(x))^2 \d{x}} + \frac{\alpha}{2}\int_D u^2 \d{x},
\end{equation}
with $z$ the target value at the final time $T=1$, which we define as
$$
z(x) = \begin{cases} 
\frac{1}{8}(1-\cos(5\pi x)) & \text{if } x \in [2/5,4/5], \\
0 & \text{otherwise}.
\end{cases}
$$
We take a constant convection coefficient $s=-1$. The diffusion coefficient $k$ is stochastic in space and constant in time. At any time instance, $k$ is a 1-dimensional lognormal field determined by the covariance (\ref{eq:covariance_Gaussian}) with $\lambda = 0.3$ and $\sigma^2 = 0.1$, multiplied by a scaling factor of $\num{1e-3}$. The diffusion coefficient is chosen small enough such that it does not completely dominate the behavior of the PDE. A sample of $k$ can be found in Figure \ref{fig:sample3}. 

\subsubsection{Optimality conditions}
Evaluating (\ref{eq:optcond_lagr_general}) for this specific problem leads to the optimality conditions
\begin{equation}
\left\{\begin{array}{rcll}
\diffp{y}{t} - \frac{s}{2}\diffp{y^2}{x} - \diffp{}{x} (k\diffp{y}{x}) & = & 0 & \quad \mbox{on } \mathcal{D}, \\[0.1cm]
-\diffp{p}{t} - sy\diffp{p}{x} + \diffp{}{x}(k\diffp{p}{x}) & = & 0 & \quad \mbox{on }, \fd\\[0.1cm]
\nabla \rJ(u) = \alpha u + \mean{p(\cdot,0)} & = & 0 & \quad \mbox{on } (0,L)
\end{array}\right.
\end{equation}
with $y(\cdot,0) = u$ and $p(\cdot,T) = z-y(\cdot,T)$ on $D$, and $p = y = 0$ on $\{0,1\} \times (0,T)$.

\subsubsection{Discretization and optimization details}
The PDE is solved using the MacCormack time stepping scheme \cite{maccormack1969effect}, which is second order accurate in both space and time. 
Let $\vec{y} = [y_0, \ldots, y_m] \in \mathbb{R}^m$ be the discretization with $m+1$ points of the state at a given time step and let $\vec{k} = [k_0, \ldots, k_m] \in \mathbb{R}^m$ be the discretization of the diffusion coefficient.
The scheme then advances $\vec{y}$ to the state $\vec{y}''= [y''_0, \ldots, y''_m] \in \mathbb{R}^m$ at the next time step as follows:
\begin{align*}
y'_i &= y_{i} + \frac{\Delta t}{\Delta x}(\psi_{i+1}-\psi_{i}) + \frac{k_i}{\Delta x^2}(y_{i+1}-2y_{i}+y_{i-1}), \\
y''_i &= \frac{1}{2}\left(y_{i} + y'_{i}
-\frac{\Delta t}{\Delta x}(\psi'_{i-1}-\psi'_{i}) + \frac{k_i}{\Delta x^2}(y'_{i+1}-2y'_{i}+y'_{i-1})
\right)
\end{align*}
for $i=1,\ldots,m-1$, with $\psi_{i} = \frac{1}{2}s_iy_{i}^2$, $\psi'_{i} = \frac{1}{2}{s_iy'_{i}}^2$, and $s_i = s = -1$. Furthermore, since they are pertaining to the zero boundary, $y_0 = y_{m} = y'_0 = y'_{m} = 0$.
For this experiment, $10001$ discretization points in time are used at all levels, i.e., the grid coarsening happens only in the spatial dimension. We use $K=L=4$ levels, the finest of which is $513 \times 10001$, the coarsest $33 \times 10001$. The MacCormack method is stable if
\begin{equation}
\label{eq:MacCormack_stab}
\Delta t \leq \frac{\Delta x^2}{\max(\vec{y})\Delta x + 2\max(\vec{k})}
\end{equation}
is satisfied for all time steps \cite{lewis2005}. The reader can check, for the finest level at least, that this condition holds by inspecting Figures \ref{fig:sample3} and \ref{fig:results3}. 
The level mapping operator from Problem 2 is reused here. 

Since the problem is nonlinear, and since the stability condition (\ref{eq:MacCormack_stab}) must remain guaranteed, performing the line searches in the NCG smoother is more involved. First we attempt to minimize a quadratic approximation of the cost function along the search direction, constructed using an additional gradient evaluation as explained in \S\ref{sec:opt/smoother}. 
If the resulting step size is negative, or so large that the new state calculation fails to satisfy the stability condition\footnote{Since the size of $y$ does not grow much in time (see Figure \ref{fig:results3}), one can expect that a successful check of (\ref{eq:MacCormack_stab}) for the starting value of $y$, i.e., for the control $u$, implies the satisfaction of (\ref{eq:MacCormack_stab}) for all time steps.}, or does not yield descent, it is discarded. Instead we then attempt backtracking, starting with $s$ small enough to satisfy the stability condition. We backtrack with a factor $4$ until Armijo's sufficient descent condition is satisfied. We observed that in the vast majority of NCG steps, the quadratic approximation yields an acceptable step size. Each NCG step thus still requires about 2 gradient evaluations.  

\subsubsection{Results}
The solution is shown in Figure \ref{fig:results3}. To reach a gradient norm of $\num{1e-4}$, Algorithm \ref{alg:MG/OPT} required 3 iterations that are detailed in Table \ref{tab:mgopt3} below.
A new sample calculation shows $J = \num{4.10e-04}$ and $\|\nabla J(u)\| = \num{7.56e-05}$. The total number of fine grid solves was $5252$, the total time was $5800$ s.
\begin{table}[h]
\begin{equation*}
\arraycolsep=3.0pt
\small
\begin{array}{l|lllllllllllll}
i & \epsilon^{(i)} & n_0 & n_1 & n_2 & n_3 & n_4 & J^{(i)}_0 & J^{(i)} &\norm{\gstart} & \norm{\gend} & \text{Solves} & \text{Time [s]}  \\
\hline
1 & \num{1.00e-01} & 96 & 32 & 16 & 8 & 4 & \num{4.69e-03} & \num{4.47e-04} & \num{6.65e-02} & \num{8.33e-04} & 190 & 225\\
2 & \num{5.00e-05} & 5479 & 532 & 188 & 77 & 18 & \num{4.47e-04} & \num{4.11e-04} & \num{8.51e-04} & \num{1.06e-04} & 3365 & 3717\\
3 & \num{5.00e-05} & 3472 & 209 & 56 & 19 & 4 & \num{4.11e-04} & \num{4.10e-04} & \num{1.05e-04} & \num{4.51e-05} & 1697 & 1858\\
\end{array}
\end{equation*}
\vspace{-0.4cm}
\caption{MG/OPT results for problem 3.}
\label{tab:mgopt3}
\end{table}

Table \ref{tab:ncg3} shows the results obtained by using finest level NCG.
After the $77$ iteration steps, an evaluation using new samples yields $J = \num{4.10e-4}$ and $\|\nabla J(u)\| = \num{8.46e-5}$. An equivalent of $34993$ fine grid solves were required, and the total time cost was $41669$ s.
\begin{table}[h]
\begin{equation*}
\arraycolsep=3.0pt 
\small
\begin{array}{l|llllllllll}
i & \epsilon^{(i)} & n_0 & n_1 & n_2 & n_3 & n_4 & J^{(i)} & \norm{\gend} & \text{solves} & \text{time[s]} \\
\hline
0 & \num{1.97e-03} & 64 & 32 & 16 & 8 & 4 & \num{4.69e-3} & \num{6.73e-02} & 56(\times 21) & 95(\times 21)\\
22 & \num{5.75e-4} & 3096 & 229 & 64 & 22 & 8 & \num{4.42e-4} & \num{5.71e-04} & 578(\times 44) & 673(\times 44)\\
65 & \num{5.00e-5} & 3281 & 242 & 84 & 34 & 10 & \num{4.11e-4} & \num{8.25e-5} & 645(\times 13) & 774(\times 13)\\
\end{array}
\end{equation*}
\vspace{-0.4cm}
\caption{Results for Problem 3 using finest level optimization only.}
\label{tab:ncg3}
\end{table}
\begin{figure}[h]
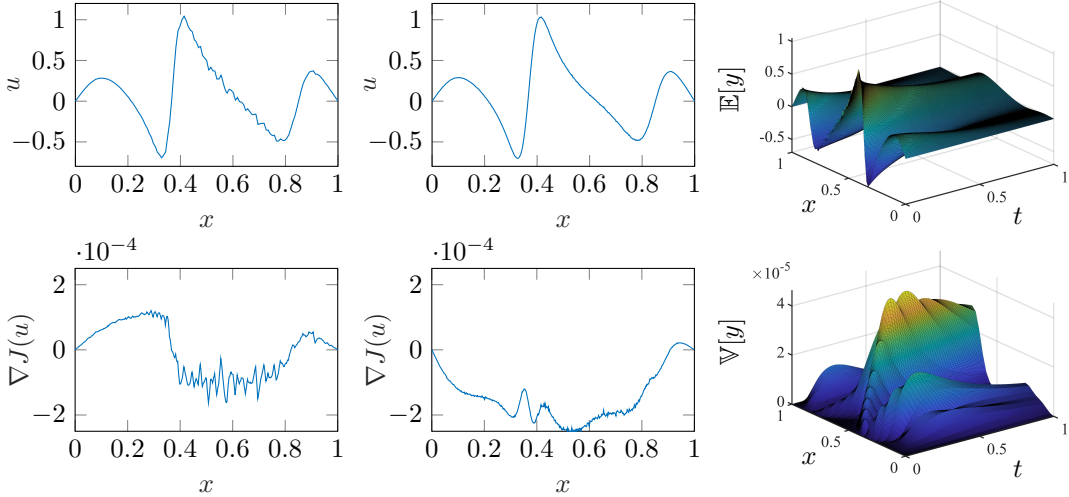
 
	\hspace{-0.01\textwidth}
	\begin{subfigure}[t]{.30\textwidth}
		\centering
		\sethw{0.5}{0.8}
		\input{fig/prob3/p1mgopt/u.tex} \\
		\input{fig/prob3/p1mgopt/grad.tex} \\
	\end{subfigure}
	\hspace{0.01\textwidth}
	\begin{subfigure}[t]{.30\textwidth}
		\centering
		\sethw{0.5}{0.8}
		\input{fig/prob3/p1mlmc/u.tex} \\
		\input{fig/prob3/p1mlmc/grad.tex} \\
	\end{subfigure}
	\hspace{0.01\textwidth}
	\begin{subfigure}[t]{.35\textwidth}
		\vspace{-3.5cm}
		\centering
		\sethw{0.88}{0.88}
		\input{fig/prob3/p1mgopt/Ey.tex} \\
		\vspace{-0.25cm}
		\input{fig/prob3/p1mgopt/Vy.tex} \\
	\end{subfigure}
	\caption{Optimization results for Problem 3. The leftmost figures show the solution $u$ and its gradient $\nabla J(u)$ obtained using MG/OPT. The figures in the middle show the result obtained using finest level NCG only. While the initial state looks different, they very quickly become indistinguishable when marching toward the final state.
	The expected value and variance of the state $y$ shown in the rightmost figures pertain to the solution obtained using MG/OPT. The figures are shown for the finest grid of $513 \times 10001$.}
	\label{fig:results3}
\end{figure}

\section{Conclusions}
\label{sec:conclusion}
A method was presented that uses the MLMC estimator for the calculation of gradients in the MG/OPT framework. Including optimization steps on the coarser levels can drastically reduce the number of iteration steps needed for a variety of problems. A theorem detailing the expected theoretical convergence was given and proven. Reaching a specified gradient tolerance is expected to be proportional in cost to estimating a gradient with that tolerance as the requested RMSE.

The results in this paper also show the importance of integrating the uncertainty in the optimization problem itself. 
The details of the estimation of statistical quantities should not remain constant, but vary as the optimization progresses. Only when an accurate estimation is required, should it be calculated. This is impossible if one deals with the uncertainties and the optimization separately. In our method this principle manifests itself in the use of fewer samples on coarser MG/OPT levels, and near the start of the optimization.

We showed that the ideas in this paper can be viable for several different problems, some benefiting more than others. The various PDE constraints and cost functions do not formally require any adaptations to the proposed strategy. 
Generally speaking, though, two requirements need to be fulfilled for the strategy to work. 
First, for the MLMC estimation to work, the discretization levels should be defined in a way that ensures that realizations for the same stochastic parameters on two consecutive discretization levels remain highly correlated. While the MLMC estimation of the gradient was already investigated in \cite{vanbarel2019robust} for the Laplace equation, the results in this paper show that MLMC can also work for other constraints, such as for the time dependent Burgers' equation. 
Second, it is required that MG/OPT performs well for a deterministic version of the problem. If it does not, one cannot hope for it to work in the more general stochastic case.

While not explicitly investigated here, MG/OPT, and, therefore, the strategy in this paper, can also deal with inequality constraints. The interested reader is referred to, e.g., \cite{ulbrich2011semismooth} for general information on numerically solving such problems, and to \cite{lewis2005} for details particular to the MG/OPT method.

\bibliographystyle{siamplain}
{\footnotesize
\bibliography{bib_avb}
}
\end{document}